\newtheorem{theorem}{Theorem}[section]
\newtheorem{lemma}[theorem]{Lemma}
\newtheorem{conjecture}[theorem]{Conjecture}
\newenvironment{proof}[1][Proof]{\begin{trivlist}
\item[\hskip \labelsep {\bfseries #1}]}{\end{trivlist}}
\newenvironment{definition}[1][Definition]{\begin{trivlist}
\item[\hskip \labelsep {\bfseries #1}]}{\end{trivlist}}
\newenvironment{example}[1][Example]{\begin{trivlist}
\item[\hskip \labelsep {\bfseries #1}]}{\end{trivlist}}
\newcommand{\qed}{\nobreak \ifvmode \relax \else      \ifdim\lastskip<1.5em \hskip-\lastskip
\hskip1.5em plus0em minus0.5em \fi \nobreak
\vrule height0.75em width0.5em depth0.25em\fi}
\begin{document}

\begin{center}

{\huge Differences of Skew Schur Functions of Staircases with Transposed Foundations}

\

{\LARGE Matthew Morin}

University of British Columbia

mjmorin@math.ubc.ca
\

\end{center}

\noindent {Mathematics Subject Classification: 05E05}

\noindent {Keywords: skew Schur functions, Schur-positivity, Littlewood-Richardson coefficients, staircase diagrams}

\

\begin{center}
\textbf{Abstract}
\end{center}

We consider the skew diagram $\Delta_n$, which is the $180^\circ$ rotation of the staircase diagram $\delta_n = (n,n-1,n-2,\ldots,2,1)$. 
We create a \textit{staircase with bad foundation} by augmenting $\Delta_n$ with another skew diagram, which we call the \textit{foundation}. 
We consider pairs of staircases with bad foundation whose foundations are transposes of one another. 
Among these pairs, we show that the difference of the corresponding skew Schur functions is Schur-positive in the case when one of the foundations consists of either a one or two row diagram, or a hook diagram.

\setlength{\unitlength}{0.1mm}

\newcommand{\balpha}{ \alpha^{\triangleleft} }

\newcommand{\bbeta}{ \beta^{\triangleleft}}

\newcommand{\lroof}{\mbox{\begin{picture}(3,0)
                                \put(0,0){\line(0,1){10}}

                                \put(0,10){\line(1,0){3}}

                           \end{picture}}}

\newcommand{\rroof}{\mbox{\begin{picture}(3,0)
                                \put(3,0){\line(0,1){10}}

                                \put(3,10){\line(-1,0){3}}

                           \end{picture}}}

\newcommand{\longlroof}{\mbox{\begin{picture}(3,0)
                                \put(0,-4){\line(0,1){14}}

                                \put(0,10){\line(1,0){3}}

                           \end{picture}}}

\newcommand{\longrroof}{\mbox{\begin{picture}(3,0)
                                \put(3,-4){\line(0,1){14}}

                                \put(3,10){\line(-1,0){3}}

                           \end{picture}}}

\newcommand{\wedgeline}{\mbox{\begin{picture}(0,0)
                                \put(-1.5,0){$\wedge$}
                                \put(-2,3){\line(0,-1){3}}
                              \end{picture}}}
\newcommand{\Boox}{\mbox{\begin{picture}(0,0)
                                \put(0,-3){ \framebox(10,10){\tiny{$j-1$}} }
                            \end{picture}}}

\newcommand{\hooka}{\mbox{\begin{picture}(0,0)
                                \put(-5,-3){ \framebox(10,10){} }
                                \put(-5,-13){ \framebox(10,10){} }
                                \put(-5,-23){ \framebox(10,10){} }
                                \put(-5,-33){ \framebox(10,10){} }
                                \put(-5,-43){ \framebox(10,10){} }
                                \put(-5,-53){ \framebox(10,10){} }
                            \end{picture}}}

\newcommand{\hookb}{\mbox{\begin{picture}(0,0)
                                \put(-5,-3){ \framebox(10,10){} }
                                \put(5,-3){ \framebox(10,10){} }
                                \put(-5,-13){ \framebox(10,10){} }
                                \put(-5,-23){ \framebox(10,10){} }
                                \put(-5,-33){ \framebox(10,10){} }
                                \put(-5,-43){ \framebox(10,10){} }
                            \end{picture}}}

\newcommand{\hookc}{\mbox{\begin{picture}(0,0)
                                \put(-5,-3){ \framebox(10,10){} }
                                \put(5,-3){ \framebox(10,10){} }
                                \put(15,-3){ \framebox(10,10){} }
                                \put(-5,-13){ \framebox(10,10){} }
                                \put(-5,-23){ \framebox(10,10){} }
                                \put(-5,-33){ \framebox(10,10){} }
                            \end{picture}}}

\newcommand{\hookd}{\mbox{\begin{picture}(0,0)
                                \put(-5,-3){ \framebox(10,10){} }
                                \put(5,-3){ \framebox(10,10){} }
                                \put(15,-3){ \framebox(10,10){} }
                                \put(25,-3){ \framebox(10,10){} }
                                \put(-5,-13){ \framebox(10,10){} }
                                \put(-5,-23){ \framebox(10,10){} }
                            \end{picture}}}

\newcommand{\hooke}{\mbox{\begin{picture}(0,0)
                                \put(-5,-3){ \framebox(10,10){} }
                                \put(5,-3){ \framebox(10,10){} }
                                \put(15,-3){ \framebox(10,10){} }
                                \put(25,-3){ \framebox(10,10){} }
                                \put(35,-3){ \framebox(10,10){} }
                                \put(-5,-13){ \framebox(10,10){} }
                            \end{picture}}}

\newcommand{\hookf}{\mbox{\begin{picture}(0,0)
                                \put(-5,-3){ \framebox(10,10){} }
                                \put(5,-3){ \framebox(10,10){} }
                                \put(15,-3){ \framebox(10,10){} }
                                \put(25,-3){ \framebox(10,10){} }
                                \put(35,-3){ \framebox(10,10){} }
                                \put(45,-3){ \framebox(10,10){} }
                            \end{picture}}}

\setlength{\unitlength}{0.1mm}

\newcommand{\starbox}{\mbox{\begin{picture}(0,0)
                                \put(0,0){ \framebox(10,10){ }}
                                \put(6,2){$*$}
                           \end{picture}}}

\newcommand{\bbox}{\mbox{\begin{picture}(0,0)
                                \put(0,0){ \framebox(10,10){ }}
                                \put(6,2){$b$}
                           \end{picture}}}

\newcommand{\cbox}{\mbox{\begin{picture}(0,0)
                                \put(0,0){ \framebox(10,10){ }}
                                \put(6,2){$c$}
                           \end{picture}}}

\newcommand{\Done}{\mbox{\begin{picture}(0,0)
                                \put(-20,-23){ \framebox(10,10){ }}
                                \put(-10,-13){ \framebox(10,10){ }}
                                \put(-10,-3){ \framebox(10,10){ }}
                                \put(-10,-23){ \framebox(10,10){ }}
                            \end{picture}}}

\newcommand{\Dtwo}{\mbox{\begin{picture}(0,0)
                                \put(-20,-23){ \framebox(10,10){ }}
                                \put(-10,-23){ \framebox(10,10){ }}
                                \put(-20,-13){ \framebox(10,10){ }}
                                \put(-10,-13){ \framebox(10,10){ }}
                                \put(0,-13){ \framebox(10,10){ }}
                                \put(10,-13){ \framebox(10,10){ }}
                                \put(-20,-3){ \framebox(10,10){ }}
                                \put(-10,-3){ \framebox(10,10){ }}
                                \put(0,-3){ \framebox(10,10){ }}
                                \put(10,-3){ \framebox(10,10){ }}
                             \end{picture}}}

\newcommand{\Donedottwo}{\mbox{\begin{picture}(0,0)
                                \put(-20,-23){ \framebox(10,10){ }}
                                \put(-10,-23){ \framebox(10,10){ }}
                                \put(-20,-13){ \framebox(10,10){ }}
                                \put(-10,-13){ \framebox(10,10){ }}
                                \put(0,-13){ \framebox(10,10){ }}
                                \put(10,-13){ \framebox(10,10){ }}
                                \put(-20,-3){ \framebox(10,10){ }}
                                \put(-10,-3){ \framebox(10,10){ }}
                                \put(0,-3){ \framebox(10,10){ }}
                                \put(10,-3){ \framebox(10,10){ }}

                                \put(-30,-53){ \framebox(10,10){ }}
                                \put(-20,-43){ \framebox(10,10){ }}
                                \put(-20,-33){ \framebox(10,10){ }}
                                \put(-20,-53){ \framebox(10,10){ }}
                             \end{picture}}}

\newcommand{\Doneodottwo}{\mbox{\begin{picture}(0,0)
                                \put(-20,-23){ \framebox(10,10){ }}
                                \put(-10,-23){ \framebox(10,10){ }}
                                \put(-20,-13){ \framebox(10,10){ }}
                                \put(-10,-13){ \framebox(10,10){ }}
                                \put(0,-13){ \framebox(10,10){ }}
                                \put(10,-13){ \framebox(10,10){ }}
                                \put(-20,-3){ \framebox(10,10){ }}
                                \put(-10,-3){ \framebox(10,10){ }}
                                \put(0,-3){ \framebox(10,10){ }}
                                \put(10,-3){ \framebox(10,10){ }}

                                \put(-40,-43){ \framebox(10,10){ }}
                                \put(-30,-33){ \framebox(10,10){ }}
                                \put(-30,-23){ \framebox(10,10){ }}
                                \put(-30,-43){ \framebox(10,10){ }}
                             \end{picture}}}

\section{Introduction}

The development and study of Schur functions has shown them to not only be integral to the study of symmetric functions, but also to have many interesting and far-reaching consequences. 
The Schur functions are perhaps best known as a basis of the ring of symmetric functions. As such, the structure constants $c_{\mu \nu}^{\lambda}$, commonly known as the \textit{Littlewood-Richardson coefficients}, appearing in the products
\[ s_\mu s_\nu = \sum_{\lambda} c_{\mu \nu}^{\lambda} s_{\lambda}, \]
are of paramount importance to the ring structure.

The Schur functions also appear in the study of representation theorey. 
In particular, in representations of the symmetric group, general linear groups, and special linear groups. 
For instance, in representations of the symmetric group, the Specht modules are in one-to-one correspondence with the Schur functions. 
Further, given two Specht modules $S^\mu$ and $S^\nu$ we have
\[ (S^\mu \otimes S^\nu)\uparrow^{S_n} = \bigoplus_{\lambda} c_{\mu \nu}^{\lambda} S^{\lambda}. \]

Schur functions are also intrinsic to the structure of the cohomology ring of the Grassmannian, where the Schubert classes are in correspondence to the Schur functions and the cup product of each pair $\sigma_\mu$, $\sigma_\nu$ of Schubert classes satisfies
\[ \sigma_\mu \cup \sigma_\nu = \sum_{\lambda} c_{\mu \nu}^{\lambda} \sigma_\lambda.\]

Besides arising in each of these areas, the Littlewood-Richardson coefficients are known to satisfy $c_{\mu \nu}^{\lambda} \geq 0.$ 
In particular, $c_{\mu \nu}^{\lambda}$ is always a non-negative integer. 
Thus each product $s_\mu s_\nu$ gives rise to a linear combination of Schur functions with non-negative coefficients. 
We call such expressions \textit{Schur-positive}.
Another famous example of Schur-positivity arises when considering skew Schur functions. 
Namely, we have 
\[ s_{\lambda / \mu} = \sum_{\nu} c_{\mu \nu}^{\lambda} s_{\nu},\]
for each skew diagram $\lambda / \mu$.

In recent papers, there has been considerable interest in determining the Schur-positivity of expressions of the form
\[ s_{\mu} s_{\nu} - s_{\lambda} s_{\rho} \textrm{ } \textrm{ and } \textrm{ } s_{\lambda / \mu} - s_{\rho / \nu}.\]
Each of these such Schur-positive differences gives a set of inequalities that the corresponding Littlewood-Richardson coefficients must satisfy. 
However, we note that each product of Schur functions $s_{\mu} s_{\nu}$ can be realized as a certain skew Schur function, we can focus on the problem of skew Schur differences. 
We shall construct certain families of skew Schur functions and completely describe all the Schur-positive differences within each family.

\section{Preliminaries}

We begin by briefly introducing the various objects, notations, and results we shall require.  
A complete study can be found in such sources such as \cite{sagan} of \cite{stanley}.

A \textit{partition} $\lambda$\label{def:lambda} of a positive integer $n$, written $\lambda \vdash n$, is a sequence of weakly decreasing positive integers $\lambda = (\lambda_1,\lambda_2, \ldots, \lambda_k)$ with $\sum_{i=1}^{k} \lambda_i = n$. 
We call each $\lambda_i$ a \textit{part} of $\lambda$, and if $\lambda$ has exactly $k$ parts we say $\lambda$ is of \textit{length} $k$ and write $l(\lambda)=k$.
When $\lambda \vdash n$ we will also write $| \lambda |=n$ and say that the \textit{size} of $\lambda$ is $n$. 

We shall use $j^r$ to denote the sequence $j,j,\ldots, j$ consisting of $r$ $j$'s.
Thus, we shall write $\lambda = (k^{r_k}, {k-1}^{r_{k-1}}, \ldots ,1^{r_1})$ \label{parts} for the partition which has $r_1$ parts of size one, $r_2$ parts of size two, \ldots,  and $r_k$ parts of size $k$.

We say $\alpha=(\alpha_1,\alpha_2,\ldots,\alpha_k)$ is a \label{comp} \textit{composition} of $n$ if each $\alpha_i$ is a positive integer and $\sum_{i=1}^{k} \alpha_i = n$.
As with partitions, we call each $\alpha_i$ a \textit{part} of $\alpha$, write $|\alpha|=n$ for the \textit{size} of $\alpha$, and if $\alpha$ has exactly $k$ parts we say $\alpha$ is of \textit{length} $k$ and write $l(\alpha)=k$.
If we relax the conditions to consider sums of non-negative integers, that is, allowing some of the $\alpha_i$ to be zero, we obtain the concept of a \textit{weak composition} of $n$.

We may sometimes find it useful to treat weak compositions as vectors with non-negative integer entries.
When we write the vector ${z}=(z_1,z_2, z_3,\ldots, z_n)$ we shall mean the infinite vector ${z}=(z_1,z_2, z_3,\ldots, z_n, 0, 0, 0, \ldots)$.
However, we shall only consider vectors with finitely many non-zero entries and hence we shall only display vectors with finite length, but in this way we may unambiguously add vectors of different lengths. 
Given a positive integer $i$, we let $e_i$ denote the $i$-th standard basis vector.
That is, the vector that has its $i$-th entry equal to $1$ and all remaining entries equal to $0$.

\

Given a partition $\lambda$, we can represent it via the diagram of left justified rows of boxes whose $i$-th row contains $\lambda_i$ boxes. 
The diagrams of these type are called \textit{Ferrers diagrams}, or just \textit{diagrams} for short. 
We shall use the symbol $\lambda$ when refering to both the partition and its Ferrers diagram.

Whenever we find a diagram $D'$ contained in a diagram $D$ as a subset of boxes, we say that $D'$ is a \textit{subdiagram} of $D$.
Suppose partitions $\lambda=(\lambda_1,\lambda_2,\ldots, \lambda_j) \vdash n$ and $\mu=(\mu_1,\mu_2,\ldots, \mu_k) \vdash m$, with $m \leq n$, $k \leq j$, and $\mu_i \leq \lambda_i$ for each $i = 1, 2, \ldots, k$ are given. 
Then $\mu$ is a subdiagram of $\lambda$, and a particular copy of $\mu$ is found at the top-left corner of $\lambda$.  
We can form the \textit{skew \label{skew} diagram} $\lambda / \mu$ by removing that copy of $\mu$ from $\lambda$. 
Henceforth, when we say that $D$ is a \textit{diagram}, it is assumed that $D$ is either the Ferrers diagram of some partition $\lambda$, or $D$ is the skew diagram $\lambda / \mu$ for some partitions $\lambda, \mu$. 

\begin{example}
Here we consider the partitions $\lambda=(4,4,2)$ and $\mu=(3,1)$, and form the skew diagram $\lambda / \mu$.

\begin{center}
\setlength{\unitlength}{0.4mm}
\begin{picture}(100,40)(-30,-5)

   \put(-30,0){\line(1,0){20}}
   \put(-30,10){\line(1,0){40}}
   \put(-20,20){\line(1,0){30}}
   \put(0,30){\line(1,0){10}}

   \put(-30,0){\line(0,1){10}}
   \put(-20,0){\line(0,1){20}}
   \put(-10,0){\line(0,1){20}}
   \put(0,10){\line(0,1){20}}
   \put(10,30){\line(0,-1){20}}

\put(-30,10){\dashbox{2}(10,10){}}
\put(-30,20){\dashbox{2}(10,10){}}
\put(-30,20){\dashbox{2}(10,10){}}
\put(-20,20){\dashbox{2}(10,10){}}
\put(-10,20){\dashbox{2}(10,10){}}

   \put(30,0){\line(1,0){20}}
   \put(30,10){\line(1,0){40}}
   \put(40,20){\line(1,0){30}}
   \put(60,30){\line(1,0){10}}

   \put(30,0){\line(0,1){10}}
   \put(40,0){\line(0,1){20}}
   \put(50,0){\line(0,1){20}}
   \put(60,10){\line(0,1){20}}
   \put(70,30){\line(0,-1){20}}

\end{picture}
\end{center}

\end{example}

The number of boxes that appears in a given row or a given column of a diagram is called the \textit{length} of that row or column. 
Given any diagram $D$, the $180^{\circ}$ rotation of a diagram diagram $D$ is denoted by $D^{\circ}$.

For two boxes $b_1$ and $b_2$ in a diagram $D$, we define a \textit{path} from $b_1$ to $b_2$ in $D$ to be a sequence of steps either up, down, left, or right that begins at $b_1$, ends at $b_2$, and at no time leaves the diagram $D$.
We say that a diagram $D$ is \textit{connected} if for any two boxes $b_1$ and $b_2$ of $D$ there is a path from $b_1$ to $b_2$ in $D$. If $D$ is not connected we say it is \textit{disconnected}.

\

A \textit{hook} is the Ferrers diagram corresponding to a partition $\lambda$ that satisfies $\lambda_i \leq 1$ for all $i>1$. 
Hence a hook has at most one row of length larger than $1$.

\begin{example}
Here we see the hooks $\lambda = (4,1,1)$ and $\mu = (5,1)$.

\begin{center}
\setlength{\unitlength}{0.4mm}
\begin{picture}(100,40)(-30,-5)

\put(-20,-10){$\lambda$}

\put(-10,20){\framebox(10,10)[tl]{ }}
\put(-20,20){\framebox(10,10)[tl]{ }}
\put(-30,20){\framebox(10,10)[tl]{ }}
\put(-40,20){\framebox(10,10)[tl]{ }}
\put(-40,10){\framebox(10,10)[tl]{ }}
\put(-40,0){\framebox(10,10)[tl]{ }}

\put(60,-10){$\mu$}

\put(80,20){\framebox(10,10)[tl]{ }}
\put(70,20){\framebox(10,10)[tl]{ }}
\put(60,20){\framebox(10,10)[tl]{ }}
\put(50,20){\framebox(10,10)[tl]{ }}
\put(40,20){\framebox(10,10)[tl]{ }}
\put(40,10){\framebox(10,10)[tl]{ }}

\end{picture}
\end{center}

\end{example}

\

If $D$ is a diagram, then a \textit{tableau}---plural \textit{tableaux}---$\mathcal{T}$ \textit{of shape $D$} is the array obtained by filling the boxes of the $D$ with the positive integers, where repetition is allowed. 
A tableau is said to be a \textit{semistandard Young tableau} (or simply \textit{semistandard}, for short) if each row gives a weakly increasing sequence of integers and each columns gives a strictly increasing sequence of integers.
We will often abbreviate ``semistandard Young tableau" by SSYT and ``semistandard Young tableaux" by SSYTx. 

When we wish to depict a certain tableau we will either show the underlying diagram with the entries residing in the boxes of the diagram or we may simply replace the boxes with the entries, so that the entries themselves depict the underlying shape of the tableau.

The \textit{content} of a tableau $\mathcal{T}$ is the weak composition given by
\[ \nu(\mathcal{T}) = ( \# \textrm{1's in }\mathcal{T}, \# \textrm{2's in }\mathcal{T}, \ldots).\]

Now, given a tableau $\mathcal{T}$ of shape $D$, we may wish to focus on the entries of $\textit{T}$ that lie in some subdiagram $D'$ of $D$. 
In this way we obtain a \textit{subtableau} of shape $D'$.
Similarly, given a tableau $\mathcal{T}'$ of shape $D'$ and a diagram $D$ containing $D'$, we may sometimes wish to fill the copy of $D'$ within $D$ as in the tableau $\mathcal{T}'$. 

\

Given a skew diagram $D$, the \textit{skew Schur symmetric function corresponding to $D$} is defined to be \label{slambda}
\begin{equation}
\label{schurdef}
 s_{D}(\textbf{x}) = \sum_{\mathcal{T}} {x_1}^{\# \textrm{1's in } \mathcal{T}}{x_2}^{\# \textrm{2's in } \mathcal{T}} \cdots,
\end{equation}
where the sum is taken over all semistandard Young tableaux $\mathcal{T}$ of shape $D$. 
When $D=\lambda$ is a partition, $s_\lambda$ is called the \textit{Schur symmetric function corresponding to $\lambda$}.

The set $\{s_\lambda | \lambda \vdash n \}$ is a basis of $\Lambda^n$, the set of homogeneous symmetric functions of degree $n$. 
Thus any homogeneous symmetric function of degree $n$ can be written as a unique linear combination of the $s_\lambda$. 
Therefore for each $f \in \Lambda^n$ we can write $f=\sum_{\lambda} a_\lambda s_\lambda$ for appropriate coefficients.

As was mentioned in the introduction, for any partitions $\mu$ and $\nu$, 
\begin{equation}
\label{smusnu}
 s_\mu s_\nu = \sum_{\lambda \vdash n} c_{\mu \nu}^{\lambda} s_\lambda, 
\end{equation}
and for any skew diagram $\lambda / \mu$,
\begin{equation}
\label{slambdaskewmu}
 s_{\lambda / \mu} = \sum_{\nu \vdash n} c_{\mu \nu}^{\lambda} s_\nu 
\end{equation}
where the $c_{\mu \nu}^{\lambda}$ are the \textit{Littlewood-Richardson coefficients}.
Further, it turns out that the Littlewood-Richardson coefficients are non-negative integers and they count an interesting class of SSYT that we shall now describe.

Given a tableau $\mathcal{T}$, the \textit{reading word} of $\mathcal{T}$ is the sequence of integers obtained by reading the entries of the rows of $\mathcal{T}$ from right to left, proceeding from the top row to the bottom. 
We say that a sequence $r=r_1,r_2,\ldots, r_k$ is \textit{lattice} if, for each $j$, when reading the sequence from left to right the number of $j$'s that we have read is never greater than the number of $j+1$'s that we have read.

\begin{theorem} [Littlewood-Richardson Rule] (\cite{LR})
\label{lr}

For partitions $\lambda, \mu$, and $\nu$, the \textit{Littlewood-Richardson coefficient} $c_{\mu \nu}^{\lambda}$ is the number of SSYTx of shape $\lambda / \mu$, content $\nu$, with lattice reading word.
\end{theorem}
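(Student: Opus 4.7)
The plan is to deduce the Littlewood-Richardson rule from the combinatorial definition of skew Schur functions via \emph{jeu de taquin} and the theory of rectification. The strategy is to establish a canonical straightening of skew SSYTx, show that the fibers over any fixed straight-shape SSYT of shape $\nu$ all have the same cardinality (which will then be identified with $c_{\mu\nu}^{\lambda}$), and finally recognize the fiber over one distinguished tableau of shape $\nu$ as the set of SSYT with lattice reading word.

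First I would define jeu de taquin slides on skew SSYTx: beginning at an inner corner of $\lambda/\mu$, one slides entries to produce an SSYT of shape $\lambda'/\mu'$ with $\mu' \subsetneq \mu$ and $|\lambda'/\mu'|=|\lambda/\mu|$. Iterating until no inner corner remains yields a straight-shape SSYT called the \emph{rectification} of the original skew tableau. The technical core of the proof is Sch\"utzenberger's theorem that this rectification is independent of the sequence of slides chosen. A clean route is to prove that each slide preserves the Knuth equivalence class of the reading word, and then to invoke the uniqueness of a straight-shape SSYT in each Knuth class (a consequence of the RSK correspondence).

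With rectification well-defined, the same Knuth-class machinery shows that the number of SSYTx of shape $\lambda/\mu$ whose rectification equals a given SSYT $U$ of shape $\nu$ depends only on $\nu$, not on $U$. Grouping the sum in (\ref{schurdef}) by rectification class and comparing with (\ref{slambdaskewmu}) then identifies this common number with $c_{\mu\nu}^{\lambda}$. Taking $U$ to be the super-standard tableau $U_\nu$ with row $i$ filled entirely by $i$'s, one verifies that the reading word of $U_\nu$ is lattice, and that a word has RSK insertion tableau $U_\nu$ if and only if it is a lattice word of content $\nu$; hence an SSYT of shape $\lambda/\mu$ rectifies to $U_\nu$ exactly when its content is $\nu$ and its reading word is lattice, giving the stated formula.

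The principal obstacle is the well-definedness of rectification (Sch\"utzenberger's theorem); once that is in hand, the remaining steps reduce to comparing generating functions and bookkeeping around the RSK correspondence.
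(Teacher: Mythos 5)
The paper does not prove this theorem at all --- it is quoted as a classical result with a citation to Littlewood--Richardson, so there is no in-paper argument to compare against. Your outline is the standard modern proof via \emph{jeu de taquin} and rectification, and as a route to the result it is sound: well-definedness of rectification via Knuth equivalence and RSK, fiber counting over straight-shape tableaux, comparison with the expansion $s_{\lambda/\mu}=\sum_\nu c_{\mu\nu}^{\lambda}s_\nu$, and identification of the fiber over the superstandard tableau $U_\nu$ with the lattice-word tableaux. One caution: the step you describe as following from ``the same Knuth-class machinery'' --- that the number of skew SSYTx of shape $\lambda/\mu$ rectifying to $U$ depends only on the shape $\nu$ of $U$ and not on $U$ itself --- is \emph{not} a consequence of Knuth equivalence alone. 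Knuth equivalence gives you a well-defined rectification map, but says nothing about the sizes of its fibers over different targets of the same shape; for that you need an additional tool such as Haiman's dual equivalence, tableau switching, or an explicit bijection built from parallel sequences of reverse slides. If you intend this as a complete proof rather than a roadmap, that lemma is the second genuinely hard input alongside Sch\"utzenberger's theorem and must be supplied. A second, smaller point: the paper reads rows right to left from the top row down, so you should check that the lattice condition on \emph{that} word (rather than on the bottom-to-top row word usually fed to RSK) is the one equivalent to $\mathrm{rect}(T)=U_\nu$; the equivalence holds, but the reading conventions must be matched explicitly.
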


For any $f = \sum_{\lambda \vdash n} a_{\lambda} s_{\lambda} \in \Lambda^n$, we say that $f$ is \textit{Schur-positive}, and write $f \geq_s 0$, if each $a_{\lambda} \geq 0$. 
Therefore, the Littlewood-Richardson rule shows that both $s_{\mu} s_{\nu}$ and $s_{\lambda / \mu}$ are Schur-positive for all partitions $\lambda, \mu,$ and $\nu$. 
For $f,g \in \Lambda^n$, we will be interested in whether or not the difference $f-g$ is Schur positive. 
We write $f \geq_s g$ whenever $f-g$ is Schur-positive. 
If neither $f-g$ nor $g-f$ is Schur-positive we say that $f$ and $g$ are \textit{Schur-incomparable}.

\

If we wish to consider setting each of the variables $x_{n+1}, x_{n+2}, \ldots$ to 0, we obtain the \textit{Schur polynomial} $s_{\lambda} (x_1,\ldots, x_n)$. 
These may also be obtained by considering tableaux using entries from the set $\{1,2,\ldots, n\}.$ 
The Littlewood-Richardson rule still holds in this context, with the restriction that the content have length $\leq n$. 
We shall use the notation $s_{\lambda / \mu} =_n s_{\kappa / \rho}$ to denote an equality between two skew Schur functions upon reducing to $n$ variables.

\section{Staircases with Bad Foundations}

A Ferrers diagram is a \textit{staircase} if it is the Ferrers diagram of a partition of the form $\lambda = (n,n-1,n-2,\ldots, 2,1)$ or if it is the $180^{\circ}$ rotation of such a diagram. Both these diagrams are referred to as \textit{staircases of length $n$} and will be denoted by $\delta_n$ and $\Delta_n$ respectively. For what follows we will also define $\delta_{-1}=\delta_{0} = \emptyset$.

\begin{example} Here we see the two staircases of length 5:

\

\

\

\setlength{\unitlength}{0.5mm}

\begin{picture}(100,50)(-30,-5)

\put(-10,50){\framebox(10,10)[tl]{ }}
\put(0,50){\framebox(10,10)[tl]{ }}
\put(10,50){\framebox(10,10)[tl]{ }}
\put(20,50){\framebox(10,10)[tl]{ }}
\put(30,50){\framebox(10,10)[tl]{ }}
     
\put(-10,40){\framebox(10,10)[tl]{ }}
\put(0,40){\framebox(10,10)[tl]{ }}
\put(10,40){\framebox(10,10)[tl]{ }}
\put(20,40){\framebox(10,10)[tl]{ }}
 
\put(-10,30){\framebox(10,10)[tl]{ }}
\put(0,30){\framebox(10,10)[tl]{ }}
\put(10,30){\framebox(10,10)[tl]{ }}

\put(-10,20){\framebox(10,10)[tl]{ }}
\put(0,20){\framebox(10,10)[tl]{ }}

\put(-10,10){\framebox(10,10)[tl]{ }}

\put(20,0){$\delta_5$}

\put(100,0){$\Delta_5$}

\put(120,50){\framebox(10,10)[tl]{ }}
\put(120,40){\framebox(10,10)[tl]{ }}
\put(120,30){\framebox(10,10)[tl]{ }}
\put(120,20){\framebox(10,10)[tl]{ }}
\put(120,10){\framebox(10,10)[tl]{ }}

\put(110,40){\framebox(10,10)[tl]{ }}
\put(110,30){\framebox(10,10)[tl]{ }}
\put(110,20){\framebox(10,10)[tl]{ }}
\put(110,10){\framebox(10,10)[tl]{ }}

\put(100,30){\framebox(10,10)[tl]{ }}
\put(100,20){\framebox(10,10)[tl]{ }}
\put(100,10){\framebox(10,10)[tl]{ }}

\put(90,20){\framebox(10,10)[tl]{ }}
\put(90,10){\framebox(10,10)[tl]{ }}

\put(80,10){\framebox(10,10)[tl]{ }}

\end{picture}

\end{example}

If $\delta_n \subseteq \lambda$ as Ferrers diagrams then we say that \textit{$\lambda$ contains a staircase of size $n$}. In these cases we may create the skew diagram $\lambda / \delta_n$.

\begin{definition} Given $n \geq 1$, $k \geq 0$, and a partition $\lambda$ containing a staircase $\delta_{k-1}$ such that skew diagram $\lambda / \delta_{k-1}$ is connected and $k \leq \lambda_1 \leq n+k$, then we can create a skew diagram called a \textit{staircase with a bad foundation}, denoted $\mathcal{S}(\lambda, k,n)$, by placing $\lambda / \delta_{k-1}$ immediately below $\Delta_n$ such that the rows of the two diagrams overlap in precisely $\lambda_1-k$ positions. We call the subdiagram $\lambda / \delta_{k-1}$ the \textit{foundation} of $\mathcal{S}(\lambda, k,n)$.
\end{definition}

In the staircase with bad foundation $\mathcal{S}(\lambda, k,n)$, $k$ is the distance to the left of $\Delta_n$ that $\lambda$ extends before the copy of $\delta_{k-1}$ is removed. 
The requirement $k \leq \lambda_1$, guarantees that the top row of the skew diagram $\lambda / \delta_{k-1}$ is non-emtpy. 
The diagram $\mathcal{S}(\lambda, k,n)$ is connected if $k<\lambda_1$ and has two connected components, $\lambda / \delta_{k-1}$ and $\Delta_n$, if $k=\lambda_1$.
We also note that the skew diagram $\mathcal{S}(\lambda, k,n)$ can be written in the form $\mu / \delta_{n+k-1}$, where $\mu$ is a partition.

\begin{example}

\

For $k=0$ we have $\delta_{k-1}= \delta_{-1} = \emptyset$, and $\Delta_{n}$ and $\lambda$ overlap in $\lambda_1$ places. Thus $\mathcal{S}(\lambda, k,n)$ consists of the foundation $\lambda$ left-justified with the staircase $\Delta_n$. For example, with $n=5$, $k=0$, and $\lambda=(5,4,2)$ we have the following staircase with bad foundation:

\setlength{\unitlength}{0.5mm}

\begin{picture}(100,50)(-30,-5)

\put(-10,-10){\dashbox{1}(110,0)[tl]{ }}

\put(80,13){$\Delta_5$}

\put(20,-10){\framebox(10,10)[tl]{ }}
\put(30,-10){\framebox(10,10)[tl]{ }}
\put(40,-10){\framebox(10,10)[tl]{ }}
\put(50,-10){\framebox(10,10)[tl]{ }}
\put(60,-10){\framebox(10,10)[tl]{ }}
     
\put(30,0){\framebox(10,10)[tl]{ }}
\put(40,0){\framebox(10,10)[tl]{ }}
\put(50,0){\framebox(10,10)[tl]{ }}
\put(60,0){\framebox(10,10)[tl]{ }}
 
\put(40,10){\framebox(10,10)[tl]{ }}
\put(50,10){\framebox(10,10)[tl]{ }}
\put(60,10){\framebox(10,10)[tl]{ }}

\put(50,20){\framebox(10,10)[tl]{ }}
\put(60,20){\framebox(10,10)[tl]{ }}

\put(60,30){\framebox(10,10)[tl]{ }}

\put(20,-20){\framebox(10,10)[tl]{ }}
\put(30,-20){\framebox(10,10)[tl]{ }}
\put(40,-20){\framebox(10,10)[tl]{ }}
\put(50,-20){\framebox(10,10)[tl]{ }}
\put(60,-20){\framebox(10,10)[tl]{ }}

\put(20,-30){\framebox(10,10)[tl]{ }}
\put(30,-30){\framebox(10,10)[tl]{ }}
\put(40,-30){\framebox(10,10)[tl]{ }}
\put(50,-30){\framebox(10,10)[tl]{ }}

\put(20,-40){\framebox(10,10)[tl]{ }}
\put(30,-40){\framebox(10,10)[tl]{ }}

\put(80,-30){$\lambda$}

\put(120,-12){$\mathcal{S}(\lambda,0,5)$}

\end{picture}

\

\

\

\

\

For $k=1$ we have $\delta_{k-1}= \delta_{0} = \emptyset$, and $\Delta_{n}$ and $\lambda$ overlap in $\lambda_1-1$ places. Thus $\mathcal{S}(\lambda, k,n)$ is the diagram $\mathcal{S}(\lambda, 0,n)$ with its foundation $\lambda$ shifted one to the left. For example, with $n=5$, $k=1$, and $\lambda=(5,4,2)$ we have the following staircase with bad foundation:

\setlength{\unitlength}{0.5mm}

\begin{picture}(100,50)(-30,-5)

\put(-10,-10){\dashbox{1}(110,0)[tl]{ }}

\put(80,13){$\Delta_5$}

\put(20,-10){\framebox(10,10)[tl]{ }}
\put(30,-10){\framebox(10,10)[tl]{ }}
\put(40,-10){\framebox(10,10)[tl]{ }}
\put(50,-10){\framebox(10,10)[tl]{ }}
\put(60,-10){\framebox(10,10)[tl]{ }}
     
\put(30,0){\framebox(10,10)[tl]{ }}
\put(40,0){\framebox(10,10)[tl]{ }}
\put(50,0){\framebox(10,10)[tl]{ }}
\put(60,0){\framebox(10,10)[tl]{ }}
 
\put(40,10){\framebox(10,10)[tl]{ }}
\put(50,10){\framebox(10,10)[tl]{ }}
\put(60,10){\framebox(10,10)[tl]{ }}

\put(50,20){\framebox(10,10)[tl]{ }}
\put(60,20){\framebox(10,10)[tl]{ }}

\put(60,30){\framebox(10,10)[tl]{ }}

\put(10,-20){\framebox(10,10)[tl]{ }}
\put(20,-20){\framebox(10,10)[tl]{ }}
\put(30,-20){\framebox(10,10)[tl]{ }}
\put(40,-20){\framebox(10,10)[tl]{ }}
\put(50,-20){\framebox(10,10)[tl]{ }}

\put(10,-30){\framebox(10,10)[tl]{ }}
\put(20,-30){\framebox(10,10)[tl]{ }}
\put(30,-30){\framebox(10,10)[tl]{ }}
\put(40,-30){\framebox(10,10)[tl]{ }}

\put(10,-40){\framebox(10,10)[tl]{ }}
\put(20,-40){\framebox(10,10)[tl]{ }}

\put(80,-30){$\lambda$}

\put(120,-12){$\mathcal{S}(\lambda,1,5)$}

\end{picture}

\end{example}

\

\

\

\

Another advantage in computing the skew Schur function of a staircase with bad foundation is the following fact, which arises from the structure of the unique content, $\nu = \delta_n = (n,n-1,\ldots,2,1)$, just mentioned.

\begin{lemma}
\label{firstrowlem}
Let $\mathcal{S}(\lambda,k,n)$ be a staircase with bad foundation and $\mathcal{T}$ be a SSYT of shape $\mathcal{S}(\lambda,k,n)$ whose reading word is lattice.
Then the entries in the first row of the foundation $\lambda / \delta_{k-1}$ of $\mathcal{T}$ form a strictly increasing subsequence of the set 
\[ \{2, 3, \ldots, n+1 \} \cup \left\{ \begin{array}{rll}
                                       \{1\} & \textrm{if} & k >0 \\
                                       \emptyset & \textrm{if} & k =0 \\
\end{array} \right.  .\]
\end{lemma}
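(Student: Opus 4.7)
The plan is to exploit the fact (just established in the paper) that any lattice SSYT $\mathcal{T}$ of $\mathcal{S}(\lambda, k, n)$ restricts to the unique lattice filling of $\Delta_n$, in which the $i$-th row of $\Delta_n$ (from the top) reads $1, 2, \ldots, i$ from left to right. Thus by the time the reading word of $\mathcal{T}$ leaves $\Delta_n$, the running content equals $\delta_n = (n, n-1, \ldots, 1)$, and the $j$-th entry from the left of the bottom row of $\Delta_n$ equals $j$. Denote by $a_1 \leq a_2 \leq \cdots \leq a_m$ the entries of the first row of the foundation read left to right; the reading word processes them in the reverse order $a_m, a_{m-1}, \ldots, a_1$.

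First I would record the column-strictness constraints. When $k \geq 1$, the foundation's first row extends one column to the left of $\Delta_n$, so $a_1$ has no box above it (giving only $a_1 \geq 1$), while for $i \geq 2$ the box $a_i$ sits directly beneath the entry $i - 1$ of $\Delta_n$'s bottom row, forcing $a_i \geq i$. When $k = 0$, every $a_i$ lies beneath the entry $i$ of $\Delta_n$, forcing $a_i \geq i + 1 \geq 2$. Either way the lower half of the claimed set membership is in place: the value $1$ can occur only when $k \geq 1$.

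Next I would use the lattice condition to bound the entries above. Since $\delta_n$ contains no value larger than $n$, the running count satisfies $\#(a_m - 1) = 0$ whenever $a_m \geq n + 2$, and then reading $a_m$ produces $\#a_m = 1 > 0 = \#(a_m - 1)$, violating the lattice condition. Hence $a_m \leq n + 1$, and weak monotonicity gives $a_i \leq n + 1$ for every $i$.

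Finally I would establish strict increase. Suppose for contradiction that $a_i = a_{i+1} = v$; the column-strictness constraints force $v \geq 2$. Immediately before $a_i$ is read, the running content has been obtained by appending $a_m, a_{m-1}, \ldots, a_{i+1}$, all $\geq v$ by weak monotonicity, to $\delta_n$; hence $\#(v - 1)$ retains its initial value $n + 2 - v$, while $\#v$ has been incremented at least once (by $a_{i+1}$), giving $\#v \geq n + 2 - v = \#(v - 1)$. Reading one further copy of $v = a_i$ would then strictly exceed $\#(v - 1)$, contradicting the lattice property. Combined with the weakly increasing row condition this forces $a_i < a_{i+1}$ and completes the argument. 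The primary bookkeeping obstacle is cleanly identifying which box of $\Delta_n$'s bottom row sits above which box of the foundation's first row in the two regimes $k = 0$ and $k \geq 1$; once that is straightened out the remainder is a direct use of the Littlewood--Richardson lattice condition.
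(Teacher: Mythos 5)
Your proof is correct and follows essentially the same route as the paper's: the unique lattice filling of $\Delta_n$ fixes the running content at $\delta_n$, column-strictness below the bottom row of $\Delta_n$ rules out the value $1$ when $k=0$ (and handles the repeated-$1$ case), and the lattice condition forces both the bound $a_i \leq n+1$ and the absence of repeated values. Your version merely makes the running-content bookkeeping explicit where the paper states the lattice violations more tersely.
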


\begin{proof}
Let $\mathcal{T}$ be a SSYT of shape $\mathcal{S}(\lambda,k,n)$ whose reading word is lattice.
As discussed previously, the Littlewood-Richardson rule allows only one way of filling the $\Delta_n$ portion of the shape $\mathcal{S}(\lambda,k,n)$. 
The content of this filling of $\Delta_n$ is $(n,n-1,\ldots,2,1)$.

Let $R$ be the first row of the foundation, and suppose that $R$ contains the value $j$ twice. 
Then $j \neq 1$ since the second value in $R$ is immediately below a value $\geq 1$ and the columns of $\mathcal{T}$ strictly increase.
Also, if $j>1$, then when reading $\mathcal{T}$ the lattice condition will be violated once we have read both $j$'s.
Therefore no value $j$ can be repeated in $R$, and hence the first row of the foundation of $\mathcal{T}$ contains no repeated values. 

Now, the lattice condidion prohibits any value greater than $n+1$ from appearing in $R$.
Also, if the value $1$ appears in $R$ then it must be the first entry of $R$. 
If $k=0$, then this entry would be below an entry of the staircase $\Delta_n$ and the columns would not strictly increase. 
Hence the entry $1$ can appear in $R$ only if $k \geq 1$. \qed

\end{proof}

For the next result we only allow $0 \leq k \leq 1$. In these cases we have $\delta_{k-1} = \emptyset$. Thus the foundation of $\mathcal{S}(\lambda,k,n)$ is simply $\lambda$.

\begin{lemma} 
\label{fatjoinlemma}
Let $k$ and $n$ be given with $0 \leq k \leq 1$,  and a partition $\lambda$ such that $\lambda_1 \leq n+k$ be given.
If $T$ is a SSYT of shape $\lambda  \oplus \Delta_n$ with lattice reading word such that there is at most $k$ $1$'s in the first row of $\lambda$, then the tableau of shape $\mathcal{S}(\lambda,k,n)$ obtained from $T$ by shifting the foundation $\lambda$ to the right is also a SSYT with lattice reading word.

\end{lemma}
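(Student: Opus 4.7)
The plan is to reduce the lemma to verifying a single column-strict-increase condition at the new adjacencies, and then to derive that condition from the hypothesis together with a structural fact about the first row of~$\lambda$ in~$T$. My first observation is that since $\delta_{k-1}=\emptyset$ when $k\in\{0,1\}$, the foundation of $\mathcal{S}(\lambda,k,n)$ is all of~$\lambda$, so the ``shift'' is just a horizontal translation of~$\lambda$. This preserves each row's entries in their left-to-right order, so rows remain weakly increasing. Moreover, in both $\lambda\oplus\Delta_n$ and $\mathcal{S}(\lambda,k,n)$ the rows of~$\Delta_n$ sit above those of~$\lambda$ with no row shared; hence the reading word is literally unchanged by the shift, and the lattice property carries over for free.

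Thus the only nontrivial check is strict increase of columns where the top row of~$\lambda$ has been slid beneath the bottom row of~$\Delta_n$. Write $a_1\le a_2\le\cdots\le a_{\lambda_1}$ for the entries of the first row of~$\lambda$ in~$T$. By the uniqueness of the LR filling of~$\Delta_n$ (the argument already used in the proof of Lemma~\ref{firstrowlem}), the bottom row of~$\Delta_n$ reads $1,2,\ldots,n$ from left to right, and after the shift $a_i$ sits directly beneath the entry $i-k$ for each $k+1\le i\le\lambda_1$. It therefore suffices to show $a_i\ge i-k+1$ in this range.

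The key structural fact I would establish is that in any $T$ satisfying the hypothesis, the first row of~$\lambda$ consists of a (possibly empty) initial block of $1$'s followed by a strictly increasing sequence drawn from $\{2,\ldots,n+1\}$. I would prove this by tracking the slacks $\#(v-1)-\#v$ immediately after the reading word of~$\Delta_n$ is consumed: the unique LR-filling of~$\Delta_n$ makes these equal to~$1$ for each $v\in\{2,\ldots,n+1\}$ and~$0$ for $v\ge n+2$. Reading the first row right-to-left, if two consecutive entries were equal to some $v\ge 2$, then after reading the first of them the slack at level $v-1$ would equal~$0$, and reading the second would push it to~$-1$, violating lattice; likewise reading any value $\ge n+2$ violates lattice at the very first step.

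With this structure in hand the lemma follows from a brief case split. If $k=0$ the hypothesis forbids $1$'s, so $a_1\ge 2$ and strict increase gives $a_i\ge 2+(i-1)=i+1=i-k+1$ as required. If $k=1$ then either there are no $1$'s (handled as above), or $a_1=1$ with $a_2\ge 2$ and strict increase from there, giving $a_i\ge 2+(i-2)=i=i-k+1$ for $i\ge 2$. Either way the required column inequalities hold. The subtle step is the structural claim about the first row; everything else is routine bookkeeping.
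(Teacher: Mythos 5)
Your proposal is correct and follows essentially the same route as the paper's proof: reduce everything to the column-strictness check where the first row of $\lambda$ meets the bottom row $1,2,\ldots,n$ of $\Delta_n$, observe that the non-$1$ entries of that first row are distinct and at most $n+1$ (your slack argument is just an inline re-derivation of the reasoning behind Lemma~\ref{firstrowlem}), and conclude $a_i \geq i-k+1$ from the hypothesis on the number of $1$'s. The only cosmetic difference is that you re-prove the first-row structure rather than citing it, and you make the $k=0,1$ case split slightly more explicit.
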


\begin{proof}
Let $T$ be a SSYT of shape $\lambda   \oplus \Delta_n$ with lattice reading word and let $T_k'$ be the tableau of shape $\mathcal{S}(\lambda,k,n)$ obtained from $T$ by shifting the foundation $\lambda$ to the right.
Since shifting $\lambda $ to the right does not affect the order in which the entries are read, $T_k'$ has a lattice reading word.
Also, the rows of $T_k'$ weakly increase since they are the same as the rows of $T$.
Further, to check that the columns of $T_k'$ strictly increase, we need only check that they strictly increase at the positions where the two subdiagrams $\Delta_{n}$ and $\lambda$ are joined.

The entries in the last row of $\Delta_n$ are $1,2,\ldots, n$.

As in Lemma~\ref{firstrowlem}, the entries of the first row of $\lambda$ are distinct. 
Let $r_1 > r_2 > \ldots$ be the entries of the first row of $\lambda$. 
Since there is at most $k$ $1$'s in the first row of $\lambda$, we have $r_1 \geq 1$ if $k = 1$ and $r_1 \geq 2$ if $k = 0$.% and,  $r_j \geq$ the $j$-th smallest value of $R_{\alpha,k}$.

Suppose $k=1$. 
Then since $r_1 > r_2 > \ldots$ and $r_1 \geq 1$, we have $r_i>i$ for each $i$. 
Also $r_1$ is below no entry of $\Delta_n$ and, for each $i>1$, $r_i$ is below $i-1$ in $\Delta_n$. 
Therefore the columns of $T_k'$ strictly increase.

Suppose $k=0$.
Then since $r_1 > r_2 > \ldots$ and $r_1 \geq 2$, we have $r_i> i+1$ for each $i$. 
Also, for each $i \geq 1$, $r_i$ is below $i$ in $\Delta_n$. 
Therefore the columns of $T_k'$ strictly increase. \qed

\end{proof}

\section{Differences of Transposed Foundations}

 \addtocounter{page}{-1}

We shall look at differences of skew Schur functions of the form $s_{\mathcal{S}(\lambda^{t}, k,n)}-s_{\mathcal{S}(\lambda,k,n)}$, where $\lambda$ is restricted to be a partition with one part, a partition with two parts, or a hook partition. %, or a hook diagram. 
In both cases the difference is seen to be Schur-positive. %and Conjecture~\ref{finvarstairconj} is satisfied. 
Further, we find a simple expression for the difference in the case when $\lambda$ has a single part, from which we can see that the difference is multiplicity-free.

\subsection{Single Part Partitions}

\begin{theorem}
\label{1theorem}
Let $n$ be a positive integer, $0 \leq k \leq 1$, and $\lambda$ is a partition with a single part such that $\lambda_1 \leq k+n$. Then 
\begin{enumerate}
\item $s_{\mathcal{S}(\lambda^{t}, k,n)}-s_{\mathcal{S}(\lambda,k,n)} \geq_s 0,$ 
\item $s_{\mathcal{S}(\lambda^{t}, k,n)}=_{n+1}s_{\mathcal{S}(\lambda,k,n)},$ and
\item $s_{\mathcal{S}(\lambda^{t}, k,n)} \neq_{n+2} s_{\mathcal{S}(\lambda,k,n)}$ if $\lambda \neq \lambda^{t}$. 
\end{enumerate}
Furthermore, we have 
\[s_{\mathcal{S}(\lambda^{t}, k,n)}-s_{\mathcal{S}(\lambda,k,n)} = 
\displaystyle\sum_{ \stackrel{\mbox{\scriptsize $A \subseteq \{ 2-k,3-k,\ldots, n \}$}}{|A|\leq \lambda_1-2}}   s_{\nu(A)}, \]
where $\nu(A) = \delta_n + e_A + (0^n,1^{\lambda_1-|A|} )$.  
In particular, $s_{\mathcal{S}(\lambda^{t}, k,n)}-s_{\mathcal{S}(\lambda,k,n)}$ is multiplicity-free.

\end{theorem}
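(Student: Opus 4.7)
The plan is to enumerate SSYTx with lattice reading word for each of the two shapes $\mathcal{S}(\lambda, k, n)$ and $\mathcal{S}(\lambda^{t}, k, n)$ (with $\lambda = (\lambda_{1})$ and $\lambda^{t} = (1^{\lambda_{1}})$) using the Littlewood-Richardson rule, then subtract the resulting Schur expansions; all four assertions will follow from the closed form of the difference.

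For $\mathcal{S}(\lambda, k, n)$, the foundation is a single row of length $\lambda_{1}$. Lemma~\ref{firstrowlem} forces its entries to be distinct and drawn from $\{2-k, 3-k, \ldots, n+1\}$, and the SSYT column constraints with the uniquely-filled bottom row $1, 2, \ldots, n$ of $\Delta_{n}$ are then automatic from $c_{1} \geq 2-k$ and strict increase. Hence fillings correspond bijectively to size-$\lambda_{1}$ subsets $C \subseteq \{2-k, \ldots, n+1\}$, and each such filling has content $\delta_{n} + e_{C}$.

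For $\mathcal{S}(\lambda^{t}, k, n)$, the foundation is a strictly increasing column $c_{1} < \cdots < c_{\lambda_{1}}$. Lemma~\ref{firstrowlem} applied to the one-box first row gives $c_{1} \in \{2-k, \ldots, n+1\}$. The key step is to analyze the lattice condition on the rest of the column: since the running content after reading $\Delta_{n}$ is $\delta_{n}$ and so is supported on $\{1, \ldots, n\}$, the only contributions to counts at values $\geq n+1$ come from the $c_{j}$'s themselves. A short induction on $s$ then shows that $c_{s} \geq n+2$ forces $c_{s-1} = c_{s} - 1$, so $C = \{c_{1}, \ldots, c_{\lambda_{1}}\}$ takes the form $A \cup \{n+1, n+2, \ldots, n+t\}$ with $A \subseteq \{2-k, \ldots, n\}$ and $t \geq 0$, and conversely every such $C$ is realized. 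The content is again $\delta_{n} + e_{C}$.

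Comparing the two enumerations, the fillings for $\mathcal{S}(\lambda, k, n)$ correspond exactly to the $C$'s with $t \in \{0, 1\}$, so the difference $s_{\mathcal{S}(\lambda^{t}, k, n)} - s_{\mathcal{S}(\lambda, k, n)}$ sums over $t \geq 2$. Reparametrizing by $A$, with $|A| = \lambda_{1} - t \leq \lambda_{1}-2$, produces the stated formula with $\nu(A) = \delta_{n} + e_{A} + (0^{n}, 1^{\lambda_{1}-|A|})$, giving (1) and Schur-positivity. Multiplicity-freeness follows from the injectivity of $A \mapsto \nu(A)$, since $A$ can be recovered as $\{i \leq n : \nu(A)_{i} > n+1-i\}$. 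For (2), each $\nu(A)$ has $n + (\lambda_{1}-|A|) \geq n+2$ parts, so its Schur polynomial vanishes in $n+1$ variables. For (3), $\lambda \neq \lambda^{t}$ forces $\lambda_{1} \geq 2$, so the terms with $|A| = \lambda_{1}-2$ yield partitions of length exactly $n+2$ whose Schur polynomials are linearly independent in $n+2$ variables, making the difference nonzero. The main obstacle is the lattice analysis in the column case, which pins down the consecutive-tail structure of $C$ above $n+1$.
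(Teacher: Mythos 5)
Your proposal is correct, and it reaches the theorem by a somewhat different route than the paper. The paper proves part (1) by an explicit content-preserving injection (transpose the row of entries into the column, then invoke Lemma~\ref{fatjoinlemma} to reattach to $\Delta_n$), proves part (2) by reversing that injection when all entries are at most $n+1$, proves part (3) by exhibiting one witness filling, and only afterwards enumerates the ``excess'' column tableaux with content of length at least $n+2$ to get the closed formula. You instead enumerate \emph{all} lattice fillings of both shapes up front: the row shape is parametrized by $\lambda_1$-subsets $C\subseteq\{2-k,\ldots,n+1\}$ (distinctness from Lemma~\ref{firstrowlem}, with the column and lattice conditions automatic), and the column shape by sets of the form $A\cup\{n+1,\ldots,n+t\}$ with $A\subseteq\{2-k,\ldots,n\}$, the consecutive tail above $n$ being forced because the running content after $\Delta_n$ is supported on $\{1,\ldots,n\}$. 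Since $C\mapsto\delta_n+e_C$ is injective, both skew Schur functions are multiplicity-free with explicitly known support, and all four assertions drop out of the subtraction. This is cleaner and more self-contained for the single-row case; the trade-off is that the paper's injection-plus-Lemma~\ref{fatjoinlemma} framework is the one that scales to the two-row and hook foundations treated later, where a complete enumeration of both sides is no longer available. Your lattice analysis of the column (the forced tail $n+1,n+2,\ldots,n+t$) coincides with the argument the paper uses in its formula derivation, so the combinatorial core is shared; only the logical packaging differs.
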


\begin{example}
Let $n=4$, $\lambda = (3)$, and $k=0$.
We are interested in the following two diagrams.

\setlength{\unitlength}{0.4mm}

\begin{picture}(0,100)(-30,10)

\put(70,90){\framebox(10,10)[tl]{ }}

\put(60,80){\framebox(10,10)[tl]{ }}
\put(70,80){\framebox(10,10)[tl]{ }}

\put(50,70){\framebox(10,10)[tl]{ }}
\put(60,70){\framebox(10,10)[tl]{ }}
\put(70,70){\framebox(10,10)[tl]{ }}

\put(40,60){\framebox(10,10)[tl]{ }}
\put(50,60){\framebox(10,10)[tl]{ }}
\put(60,60){\framebox(10,10)[tl]{ }}
\put(70,60){\framebox(10,10)[tl]{ }}

\put(40,50){\framebox(10,10)[tl]{ }}
\put(40,40){\framebox(10,10)[tl]{ }}
\put(40,30){\framebox(10,10)[tl]{ }}

\put(40,15){$\mathcal{S}( \lambda^t , 0,4)$}
\put(140,15){$\mathcal{S}( \lambda , 0,4)$}

\put(170,90){\framebox(10,10)[tl]{ }}

\put(160,80){\framebox(10,10)[tl]{ }}
\put(170,80){\framebox(10,10)[tl]{ }}

\put(150,70){\framebox(10,10)[tl]{ }}
\put(160,70){\framebox(10,10)[tl]{ }}
\put(170,70){\framebox(10,10)[tl]{ }}

\put(140,60){\framebox(10,10)[tl]{ }}
\put(150,60){\framebox(10,10)[tl]{ }}
\put(160,60){\framebox(10,10)[tl]{ }}
\put(170,60){\framebox(10,10)[tl]{ }}

\put(140,50){\framebox(10,10)[tl]{ }}
\put(150,50){\framebox(10,10)[tl]{ }}
\put(160,50){\framebox(10,10)[tl]{ }}

\end{picture}

\noindent Theorem~\ref{1theorem} gives
\begin{eqnarray*}
s_{\mathcal{S}( \lambda^t , 0,4)}-s_{\mathcal{S}( \lambda,0,4)} &=& \displaystyle\sum_{ \stackrel{\mbox{\scriptsize $A \subseteq \{ 2,3, 4 \}$}}{|A|\leq 1}}   s_{\nu(A)} \\
&=& s_{\nu(\emptyset)} +  s_{\nu(\{2\})} +  s_{\nu(\{3\})} +  s_{\nu(\{4\})} \\
&=& s_{(4,3,2,1,1,1,1)} + s_{(4,4,2,1,1,1)} + s_{(4,3,3,1,1,1)} + s_{(4,3,2,2,1,1)}, \\
\end{eqnarray*}
where we have omitted the details of computing $\nu(A)$ for the various $A$.
For an example of this computation, we have 
\begin{eqnarray*}
\nu(\{2\}) &=& \delta_4 + \sum_{a \in \{2 \} } e_a + (0^4,1^{3-1} ) \\
&=& (4,3,2,1) + (0,1,0,0) + (0,0,0,0,1,1) \\
&=& (4,4,2,1,1,1). \\
\end{eqnarray*}

\end{example}

\begin{proof} (of Theorem~\ref{1theorem})

\

\noindent \textbf{1.}

To prove the Schur-positivity of $s_{\mathcal{S}(\lambda^{t}, k,n)}-s_{\mathcal{S}(\lambda,k,n)}$ we show that any SSYT $\mathcal{T}_1$ of shape $\mathcal{S}(\lambda,k,n)$ with lattice reading word gives rise to a SSYT $\mathcal{T}_2$ of shape $\mathcal{S}(\lambda^{t},k,n)$ with lattice reading word and the same content. 
We then show that we can recover $\mathcal{T}_1$ from $\mathcal{T}_2$.

Consider any SSYT $\mathcal{T}_1$ of shape $\mathcal{S}(\lambda,k,n)$ with lattice reading word. 
Lemma~\ref{firstrowlem} implies that the foundation $\lambda$ contains a strictly increasing sequence $a_1<a_2<\ldots < a_{\lambda_1}$ where $a_1 \geq 2-k$ and $a_{\lambda_1} \leq n+1$. % by the lattice condition. 

Since $a_1<a_2<\ldots < a_{\lambda_1}$, we can create a SSYT of shape $\lambda^{t}$ by transposing the entries of the foundation $\lambda$. 
Thus we have a SSYT of shape $\lambda \oplus \Delta_n$. 
Suppose the tableau did not have a lattice reading word. Then, since the content of $\Delta_n$ was $(n,n-1,\ldots, 3,2,1)$ and $a_i \leq n+1$ for each $i$, we would require two of the $a_i$ to be equal. Since the $a_i$ are distinct, this tableau does have a lattice reading word.
Then, we may apply Lemma~\ref{fatjoinlemma}, we obtain a SSYT $\mathcal{T}_2$ of shape $\mathcal{S}(\lambda^t,k,n)$ with lattice reading word. 
Furthermore, we may recover $\mathcal{T}_1$ from $\mathcal{T}_2$ by transposing the entries of the foundation $\lambda^{t}$ of $\mathcal{T}_2$.

This completes the proof that $s_{\mathcal{S}(\lambda^{t}, k,n)}-s_{\mathcal{S}(\lambda,k,n)} \geq_s 0$.

\

\noindent \textbf{2.}

As discussed in the introduction, using the Littlewood-Richardson rule to compute the skew Schur functions in $n+1$ variables reduces the usual Littlewood-Richardson rule to considering only those fillings that use numbers from the set $\{1,2,\ldots, n+1\}$.
We now show that we can reverse the process described in the proof of \textbf{1} when we restrict to fillings using these numbers. % $\{1,2,\ldots, n+1\}$.
So we take any SSYT $\mathcal{T}_2$ of shape $\mathcal{S}(\lambda^{t},k,n)$ with lattice reading word whose entries are from the set $\{1,2,\ldots, n+1\}$. 
Therefore, the foundation $\lambda^{t}$ consists of a strictly increasing sequence $a_1 < a_2 < \ldots < a_{\lambda_1}$ and $a_{\lambda_1}\leq n+1$. 

As before, we can check that the tableau of shape $\lambda \oplus \Delta_n$ has lattice reading word. 
If not, then since the content of $\Delta_n$ was $(n,n-1,\ldots, 3,2,1)$ and $a_i \leq n+1$ for each $i$, we would require two of the $a_i$ to be equal. Since the $a_i$ are distinct, this tableau does have a lattice reading word. 
We may now use Theorem~\ref{fatjoinlemma} to obtain a SSYT $\mathcal{T}_1$ of shape $s_{\mathcal{S}(\lambda, k,n)}$.

Also, we can recover the tableau $\mathcal{T}_2$ from $\mathcal{T}_1$ by transposing the entries of the foundation $\lambda$.
This shows that $s_{\mathcal{S}(\lambda, k,n)}-s_{\mathcal{S}(\lambda^{t},k,n)} \geq_s 0$ as functions in $n+1$ variables. This and \textbf{1} completes the proof that $s_{\mathcal{S}(\lambda^{t}, k,n)} =_{n+1} s_{\mathcal{S}(\lambda,k,n)}$. 

\

\noindent \textbf{3.}

If $\lambda \neq \lambda^{t}$ then $\lambda_1 \geq 2$. 
Therefore $\lambda^{t}$ consists of a column of length $\lambda_1$, where $2 \leq \lambda_1 \leq n+k$. 
We shall fill the last two spaces of the column with $n+1$ and $n+2$, and then fill the remaining $\lambda_1-2$ spaces with $2-k, 3-k, \ldots, \lambda_1-1-k$. 
That is, we fill the column with the entries $2-k,3-k,\ldots, \lambda_1-1-k, n+1, n+2$. 
This filling gives rise to a SSYT of shape $\mathcal{S}(\lambda^{t}, k,n)$ with a lattice reading word. 
Further, no SSYT of shape $\mathcal{S}(\lambda, k,n)$ with the same content can have a lattice reading word since the lattice condition implies that the $n+1$ lies to the right of the $n+2$ and this violates the fact that the row weakly increases. % at the end of the $\lambda_1$ row, which leaves no room for the $n+2$. 

\

This completes the proof of \textbf{1}, \textbf{2}, and \textbf{3}. 
We now show that the formula stated for $s_{\mathcal{S}(\lambda^{t}, k,n)} - s_{\mathcal{S}(\lambda,k,n)}$ is correct.
From \textbf{2} we know that the only terms $s_\nu$ with non-zero coefficient that appear in the difference have $l(\nu) \geq n+2$. %$[s_\nu] (s_{\mathcal{S}(\lambda^{t}, k,n)} - s_{\mathcal{S}(\lambda,k,n)})=0$ if $l(\nu) \leq n+1$. 

Let $\nu$ be any partition with $l(\nu) \geq n+2$.
No SSYT of shape $\mathcal{S}(\lambda, k,n)$ with lattice reading word and content $\nu$ exists since the lattice condition requires that both the values $n+1$ and $n+2$ appear in $\lambda$, but the lattice condition also requires the $n+1$ appears to the right of the $n+2$ and this violates the fact that the row weakly increases.

Any SSYT of shape $\mathcal{S}(\lambda^{t}, k,n)$ with lattice reading word and content $\nu$ must contain both the values $n+1$ and $n+2$ in $\lambda^t$. 
These appear one above the other in $\lambda^t$. 
The lattice condition implies that any values below the entry $n+2$ in $\lambda^t$, assuming there are any, must be the values $n+3$, $n+4$, $\ldots$, until the end of the column is reached. 
The values of $\lambda^t$ above the entry $n+1$ form some set $A \subseteq \{ 1,2,\ldots,n \}$. 
In fact, for $k=0$ we must have $A \subseteq \{ 2,3,\ldots,n \}$ since there is an entry $1$ directly above $\lambda^t$.
Hence, in either case, $A \subseteq \{ 2-k,3-k,\ldots,n \}$.
The order that these values appear in $\lambda^t$ is uniquely determined since the column must strictly increase.
Since there are only $\lambda_1$ entries in $\lambda^t$ and we know that both $n+1$, $n+2$ $\in \lambda^t$, we have $|A| \leq \lambda_1 -2$.

Furthermore, any $A \subseteq \{ 2-k,3-k,\ldots,n \}$ with $|A| \leq \lambda_1 -2$ gives rise to a unique SSYT of shape $\mathcal{S}(\lambda^{t}, k,n)$ with lattice reading word that contains $A$ in its foundation. 
Namely, we fill the first $|A|$ boxes of $\lambda^t$ with the values of $A$ and the remaining $\lambda_1 - |A|$ boxes of $\lambda^t$ with $n+1,n+2,\ldots, n+\lambda_1 - |A|$.
The content of this filling is $\nu(A) = \delta_n + e_A +(0^n,1^{\lambda_1 -|A|})$.
Since $l(\nu(A)) \geq n+2$, there is no SSYT of shape $\mathcal{S}(\lambda, k,n)$ with lattice reading word and content $\nu(A)$.
Since all SSYT of shape $\mathcal{S}(\lambda^{t}, k,n)$ with lattice reading word and content $\nu$, for $l(\nu) \geq n+2$, arise in this manner, we have
\[s_{\mathcal{S}(\lambda^{t}, k,n)} - s_{\mathcal{S}(\lambda,k,n)} =   \displaystyle\sum_{ \stackrel{\mbox{\scriptsize $A \subset \{ 2-k,3-k,\ldots, n \}$}}{|A|\leq \lambda_1-2}}   s_{\nu(A)}, \]
as claimed. 

Now, if we have two sets $A$ and $A'$, with $\nu(A) = \nu(A')$, then looking at the first $n$ rows of these partitions gives that $\delta_n + e_A =\delta_n + e_{A'}$. Hence $e_A= e_{A'}$, so we obtain $A = A'$.
This shows that no distinct subsets contribute to the same term $s_\nu$. Therefore the difference is multiplicity-free. \qed

\end{proof}

\subsection{Two Part Partitions}
We now turn to the case of partitions with exactly two parts.
In Theorem~\ref{2theorem} the condition $\lambda_1>1$ is present to prevent $\lambda$ from being a single column. 
We wish to exclude this case since the previous section already inspected the difference when the foundations were a row and column, respectively. 

\begin{theorem}
\label{2theorem}
Let $n$ be a positive integer, $0 \leq k \leq 1$, and $\lambda$ is a partition with two rows such that $1<\lambda_1 \leq k+n$. Then 
\begin{enumerate}
\item $s_{\mathcal{S}(\lambda^{t}, k,n)}-s_{\mathcal{S}(\lambda,k,n)} \geq_s 0,$ 
\item $s_{\mathcal{S}(\lambda^{t}, k,n)}=_{n+1}s_{\mathcal{S}(\lambda,k,n)},$ and
\item $s_{\mathcal{S}(\lambda^{t}, k,n)} \neq_{n+2} s_{\mathcal{S}(\lambda,k,n)}$ if $\lambda \neq \lambda^{t}.$ 
\end{enumerate}
\end{theorem}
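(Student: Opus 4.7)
The plan is to adapt the three-part strategy of Theorem~\ref{1theorem} to a foundation with two rows. For any lattice SSYT $\mathcal{T}_1$ of shape $\mathcal{S}(\lambda,k,n)$, Lemma~\ref{firstrowlem} gives a strictly increasing first-row sequence $a_1<\cdots<a_{\lambda_1}$ drawn from $\{2-k,\ldots,n+1\}$, while the second row consists of weakly increasing entries $b_1\leq\cdots\leq b_{\lambda_2}$ satisfying $b_i>a_i$ by column-strictness. These two sequences will be the raw material for a content-preserving map into shape $\mathcal{S}(\lambda^t,k,n)$.

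For part 1, I would define $\mathcal{T}_2$ by filling the first (length-$\lambda_1$) column of $\lambda^t$ top-to-bottom with $a_1,\ldots,a_{\lambda_1}$ and the second (length-$\lambda_2$) column top-to-bottom with $b_1,\ldots,b_{\lambda_2}$. The left column is automatically strictly increasing; the row weak-increase $a_i\leq b_i$ holds since $b_i>a_i$; and the inverse map reads the two columns of the foundation of $\mathcal{T}_2$ back into two rows, recovering $\mathcal{T}_1$. Thus the difference is Schur-positive once $\mathcal{T}_2$ is verified to be a lattice SSYT.

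The main obstacle is this verification, and I expect it to be the delicate heart of the proof. There are two separate issues. First, column strictness in $\lambda^t$ requires the $b_i$ to be \emph{strictly} increasing, whereas in $\mathcal{T}_1$ they are only weakly so; my expectation is that the lattice condition on $\mathcal{T}_1$ combined with $b_i>a_i$ actually forces distinctness of the $b_i$ in any lattice SSYT of a two-row foundation, and I would prove this by a direct lattice-counting argument on the prefix containing the staircase followed by the first row and the initial portion of the second row. Second, the reading word of $\lambda^t$ interleaves first- and second-row entries of $\lambda$ in a different order than the reading word of $\lambda$ does, so the lattice condition must be re-checked by a prefix-by-prefix comparison; the key observation will be that each prefix of $\mathcal{T}_2$'s foundation-reading is a rearrangement of some prefix of $\mathcal{T}_1$'s, and the changes are local enough that the pairwise counts of $j$'s and $(j{+}1)$'s are preserved.

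For part 2, restricting to fillings in $n+1$ variables, the bounds $a_{\lambda_1},b_j\leq n+1$ combined with Lemma~\ref{fatjoinlemma} should allow the inverse of the part-1 map to be applied unconditionally, yielding a bijection and hence $s_{\mathcal{S}(\lambda^t,k,n)}=_{n+1}s_{\mathcal{S}(\lambda,k,n)}$. For part 3, since $\lambda\neq\lambda^t$ with $\lambda_1>1$ and $\lambda$ has two parts, we must have $\lambda_1\geq 3$ (the only self-transpose two-part shapes with $\lambda_1>1$ are $(2,1)$ and $(2,2)$). I would then mimic Theorem~\ref{1theorem} part~3 by filling the first column of $\lambda^t$ with $2-k,3-k,\ldots,\lambda_1-1-k,n+1,n+2$ top-to-bottom, and the second column with any compatible strictly increasing sequence making the rows weakly increase and the full tableau lattice; any SSYT of $\mathcal{S}(\lambda,k,n)$ with the same content would be forced to place both $n+1$ and $n+2$ in the two-row foundation, where lattice demands $n+2$ be read before $n+1$ but row weak-increase forbids $n+1$ from lying to the right of $n+2$, yielding the desired contradiction.
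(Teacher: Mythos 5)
Your reduction of part 1 to the claim that the second-row entries $b_1\leq\cdots\leq b_{\lambda_2}$ of a lattice SSYT of shape $\mathcal{S}(\lambda,k,n)$ must in fact be distinct is the fatal gap: that claim is false. The lattice condition only forces a repeated value $j$ in the second row to be preceded (in reading order) by a $j-1$, and that $j-1$ can sit in the \emph{first} row; it does not preclude the repetition. A concrete counterexample is the foundation of $\mathcal{S}((7,6),0,n)$ for $n\geq 11$ with first row $2,4,5,6,9,10,12$ and second row $6,7,7,8,11,11$, which is a perfectly good lattice filling (the paper uses exactly this as its worked example). Consequently the naive transposition $a_i\mapsto$ first column, $b_i\mapsto$ second column produces a second column that is not strictly increasing, and your map is not even well defined on all of the domain. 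Repairing this is the real content of the theorem: one must detect each repeated pair $j,j$ in the new second column, locate an associated $j-1$ and a maximal run of consecutive values spanning both columns, and rotate that block of entries; one must then show the blocks are pairwise disjoint, that the rotation preserves semistandardness and latticeness, and that it is reversible (and a symmetric block-rotation argument is needed for the reverse direction in part 2, since transposing two strictly increasing columns can produce length-two columns with $a_i=b_i$). A separate, smaller omission in part 1: if the foundation contains both an $n+1$ (necessarily $a_{\lambda_1}$) and an $n+2$ (necessarily $b_{\lambda_2}$), then after transposition the $n+2$ is read before the $n+1$, violating latticeness, so these two entries must first be swapped.

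Part 3 as sketched also does not close. If your filling of $\lambda^t$ contains only one $n+1$ and one $n+2$ in total, then a lattice SSYT of shape $\mathcal{S}(\lambda,k,n)$ with the same content \emph{does} exist (put the $n+1$ at the end of the first row and the $n+2$ at the end of the second row; the reading word meets the $n+1$ first, so there is no violation). The non-existence argument only works if the content is chosen so that the large values cannot be distributed between the two rows of $\lambda$: e.g.\ two $n+1$'s and one $n+2$ when $\lambda_2=1$ (the $n+2$ fills the whole second row, so both $n+1$'s would have to share the first row, contradicting distinctness of first-row entries), or two $n+2$'s when $\lambda_2\geq 2$ (both would have to fit in the second row, where at most one $n+2$ can appear). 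So the second column of your $\lambda^t$-filling cannot be ``any compatible sequence''; it must be engineered to duplicate $n+1$ or $n+2$, with cases depending on $\lambda_2$.
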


\begin{proof} %(of Theorem~\ref{2theorem})

\

\noindent \textbf{1.}

Consider any SSYT $\mathcal{T}_1$ of shape $\mathcal{S}(\lambda,k,n)$ with lattice reading word, where $0 \leq k \leq 1$. Then the entries of the foundation $\lambda$ have the following relations.

\begin{eqnarray*}
\  &  & a_1 <a_2 <\ldots <a_{\lambda_2} < \ldots < a_{\lambda_1}
\\ &  & \wedge \textrm{ } \textrm{ } \textrm{ } \textrm{ } \wedge \textrm{ } \textrm{ } \textrm{ } \textrm{ } \ldots \textrm{ } \textrm{ } \textrm{ } \wedge
\\ &  & b_1 \leq b_2 \textrm{ } \leq \ldots \leq b_{\lambda_2}
\\
\end{eqnarray*}

\noindent The fact that $a_1 <a_2 <\ldots < a_{\lambda_1}$ follows from Lemma~\ref{firstrowlem}, 
and the rest of the relations follow from the definition of a SSYT. 
Since each $a_i$ is positive, each $b_j \geq 2$. 
We also have the condition $a_1 > 1$ when $k=0$.

The relations $a_1 <a_2 <\ldots < a_{\lambda_1}$ show that there is no repeated value in the first row.
Further, if there is a repeated value $j$ in the second row, then $j \geq 2$ and the lattice condition shows that a $j-1$ occured in the first row, and that another $j$ could not have occured in the first row.
Therefore, if there is a repeated value in the foundation of $\mathcal{T}_1$, then it only appears twice. 
We shall create the desired tableau $\mathcal{T}_2$ of shape $\mathcal{S}( \lambda^{t}, k,n )$ in four steps that we shall describe presently.

In \textbf{Step 1} we will transpose the foundation of $\mathcal{T}_1$, giving us a tableau $T$ of shape $\lambda^t$, where $T$ is not necessarily semistandard. 
In \textbf{Step 2} we will check whether or not $n+2$ appears in our tableau $T$. If it does, we will swap its position so that it will be read after reading an $n+1$. 
This gives us a tableau $T'$ of shape $\lambda^t$ which still may not be semistandard, but will not violate the lattice condition when reading the value $n+2$. 
In \textbf{Step 3} we will fix any places in $T'$ where a column is not strictly increasing by rotating certain blocks of entries. 
After completing this step, we will have a SSYT $T''$ of shape $\lambda^t$.
In \textbf{Step 4} we will append $T''$ to $\Delta_n$, creating a SSYT $\mathcal{T}_2$ of shape $\mathcal{S}( \lambda^{t}, k,n )$.
During \textbf{Step 4}, we will have shown that $\mathcal{T}_2$ has a lattice reading word. 
Finally, we check $\mathcal{T}_1$ can be recovered from $\mathcal{T}_2$.

\

\noindent \textbf{Step 1:} \textit{Transpose Foundation} ($\mathcal{T}_1 \rightarrow T$)

\

Let us consider the tableau $T$ that is obtained by transposing the entries of the foundation of $\mathcal{T}_1$. Then $T$ is a tableau of shape $\lambda^{t}$.

\

\begin{center}
\begin{tabular}{ccc}

$a_1$ & $<$ & $b_1$ \\
$\wedge$ & & $\wedgeline$ \\
& $\vdots$ & \\
$\wedge$ & & $\wedgeline$ \\
$a_{\lambda_2-1}$ & $<$ & $b_{\lambda_2-1}$  \\
$\wedge$ & & $\wedgeline$ \\
$a_{\lambda_2}$ & $<$ & $b_{\lambda_2}$ \\
$\wedge$ & &  \\
$\vdots$ &&  \\
$\wedge$ & &  \\
$a_{\lambda_1}$ & & 
\end{tabular}
\end{center}

\

\noindent \textbf{Step 2:} \textit{Fix Potential $n+2$ Lattice Problems} ($T \rightarrow T'$)

\

The tableau $T$ cannot be immediately extended to a tableau of shape $\mathcal{S}(\lambda^{t}, k,n)$ with lattice reading word if there was both a $n+1$ and a $n+2$ in $T$. 
Since $\mathcal{T}_1$ had a lattice reading word, the tableau $T$ can have at most one $n+2$, which must occur at the bottom of the second column. 
Further, having an $n+2$ requires an $n+1$ in the first column of $T$, which must also appear at the bottom. 
We split into three cases and define a tableau $T'$ of shape $\lambda^{t}$ in each case. 
In each case we display the resulting tablueau $T'$.

\begin{enumerate}

\item If $a_{\lambda_1} \leq n+1$ and $b_{\lambda_2} < n+2$ then $T' =T$.

\item If $a_{\lambda_1} = n+1$ and $b_{\lambda_2} = n+2$, then
   \begin{enumerate}
   \item If $\lambda_1 > \lambda_2$ then $T'=T$ with $a_{\lambda_1}$ and $b_{\lambda_2}$ swapped.
   \item If $\lambda_1 = \lambda_2$ then $T'=T$ with $a_{\lambda_2}$ and $b_{\lambda_2 -1}$ swapped.
   \end{enumerate}
\end{enumerate}

\begin{tabular}{ccccccccccc}
&1.& &&&2.(a)&  &&&2.(b)&\\
&& && &&  &&&&\\
$a_1$ & $<$ & $b_1$ && $a_1$ & $<$ & $b_1$  && $a_1$ & $<$ & $b_1$\\
$\wedge$ & & \wedgeline && $\wedge$ & & \wedgeline  && $\wedge$ & & \wedgeline\\
& $\vdots$ & && & $\vdots$ &  && & $\vdots$ &\\
$\wedge$ & & \wedgeline && $\wedge$ & & \wedgeline  && $\wedge$ & & \wedgeline\\
$a_{\lambda_2-1}$ & $<$ & $b_{\lambda_2-1}$  && $a_{\lambda_2-1}$ & $<$ & $b_{\lambda_2-1}$  && $a_{\lambda_2-1}$ & $<$ & $n+1$\\
$\wedge$ & & \wedgeline && $\wedge$ & & \wedgeline  &&$\wedge$ & & $\wedge$\\
$a_{\lambda_2}$ & $<$ & $b_{\lambda_2}$ && $a_{\lambda_2}$ & $<$ & $n+1$  && $b_{\lambda_2-1}$ & $<$ & $n+2$\\
$\wedge$ & &  && $\wedge$ & &  &&&&\\
$\vdots$ &&  && $\vdots$ &&  &&&&\\
$\wedge$ & &  && $\wedge$ & &  &&&&\\
$a_{\lambda_1}$ & &  && $n+2$ & &
\end{tabular}

\

\noindent The lattice condition implies that there was only one $n+2$ in the second column and only one $n+1$ in the first column. 
Thus, the inequalities shown above are accurate.
Now, when reading any of these resulting tableaux $T'$, we shall always read an $n+1$ before we read the $n+2$.

\

\noindent \textbf{Step 3:} \textit{Fix Strictly Increasing Problems} ($T' \rightarrow T''$)

\

We first recall that each entry in the second column of $T'$ is at least 2. 
Further, if we had the entry 2 repeated in the second column of $T'$, then the semistandard condition on $\mathcal{T}_1$ would have required two of the $a_i$ equal to 1. 
However, we know that the $a_i$ are distinct, hence we cannot have the entry 2 being repeated in the second column of $T'$.

Let $j$ be such that $3 \leq j \leq n+1$ and suppose that the pair of entries {\tiny$\left[ \begin{array}{r}
          j\\
          j\\
       \end{array}\right]$} appear in the second column of $T'$. 
That is, one $j$ appears immediately below another $j$. 
Since $\mathcal{T}_1$ has a lattice reading word, a $j-1$ must have appeared in the first column of $T'$. 
Since the first row of $\mathcal{T}_1$ and hence the first column of $T'$ strictly increases there can only be one $j-1$ in this column. 
We now consider which positions in this column of $T'$ could this entry $j-1$ appear. 
There are two cases to consider.

\begin{enumerate}
\item The $j-1$ appears directly to the left of the bottom $j$.
\item The $j-1$ appears somewhere below the box directly to the left of the bottom $j$.
\end{enumerate}

\noindent The $j-1$ cannot occur above the left of the bottom $j$ else the element $x$ directly to the left of the bottom $j$ would satisfy $j-1<x<j$.

In this step we shall work from the top to the bottom of the second column of $T'$. 
We begin with $j$ being the first pair of repeated elements in the second column, and proceed downwards. 
We now show how we fix these strictly increasing problems in each of the two cases mentioned above.

\

\

\noindent \textbf{Case I:} \textit{The $j-1$ appears directly to the left of the bottom $j$.}

\

Starting from the top $j$, we search up the second column. 
Suppose the entry above $j$ is the value $j-1$. 
We have already mentioned that there is another $j-1$ in the first column of $T'$. 
Since there are two $j-1$'s in $T'$, there are also two $j-1$'s in $\lambda$, and since $\mathcal{T}_1$ had a lattice reading word, there must be a $j-2$ in $\lambda$. 
Hence, there is a $j-2$ in $T'$ as well. 
If this $j-2$ appeared in the second column of $T'$, then it must appear directly above the $j-1$. 
If this is the case, then in $\mathcal{T}_1$ we wouldn't read this $j-2$ until after we had read both $j-1$'s. 
Hence if this $j-2$ is in the second column then there is also a $j-2$ in the first column, and it must appear directly above the $j-1$ in the first column. 

There are now two $j-2$'s in $T'$. 
Using the lattice condition on $\mathcal{T}_1$, this implies there is at least one $j-3$ in $T'$. 
Just as with the $j-2$ previously mentioned, if this $j-3$ appears in the second column of $T'$ then we will find that there is a $j-3$ in the first column of $T'$, which appears directly above the $j-2$. 
Continuing in this manner, we have a sequence $j-1,j-2,\ldots, j-m$ above the top $j$ in the second column and corresponding sequence $j-1,j-2,\ldots, j-m$ in the first column, and the lattice condition of $\mathcal{T}_1$ requires a $j-m-1$ to appear in $T'$. 

Since the tableau is finite, the above procedure must terminate for some value $m=i$ and we find that the element $j-i-1$ \textit{does not} occur in the second column. Thus the $j-i-1$ appears in the first column, directly above the $j-i$. 
The entries $j-1,j-2,\ldots, j-i,j-i-1$ in the first column and the entries $j,j,j-1,j-2, \ldots, j-i$ in the second column define a \textit{block} of entries.
We highlight the block in the diagram below. 
Consider rotating this block of entries clockwise as shown.

\

\begin{tabular}{ccc ccc ccc}
$*$      & $\leq$   &$y$       & &                 &  $*$      & $\leq$   &$y$       \\
$\wedge$ &          &$\wedge$  & &                 &  $\wedge$ &          &$\wedge$  \\
$x$      &$<$       &\boldmath{$j-i$}   & &                 &  $x$      &$<$       &\boldmath{$j-i-1$}     \\
$\wedge$ &          &$\wedge$  & &                 &  $\wedge$ &          &$\wedge$  \\
\boldmath{$j-i-1$}  &$<$       &\boldmath{$j-i+1$}   & &                 &  \boldmath{$j-i$}    &$=$       &\boldmath{$j-i$}     \\
$\wedge$ &          &$\wedge$  & &                 &  $\wedge$ &          &$\wedge$  \\
\boldmath{$j-i$}    &$<$       &\boldmath{$j-i+2$}   & &$\longrightarrow$&  \boldmath{$j-i+1$}  &$=$      &\boldmath{$j-i+1$}   \\
$\wedge$ &          &$\wedge$  & &                 &  $\wedge$ &          &$\wedge$  \\
$\vdots$ & $\vdots$ &$\vdots$  & &                 &  $\vdots$ & $\vdots$ &$\vdots$  \\
$\wedge$ &          &$\wedge$  & &                 &  $\wedge$ &          &$\wedge$  \\
\boldmath{$j-3$}    &$<$       &\boldmath{$j-1$}     & &                 &  \boldmath{$j-2$}    &$=$       &\boldmath{$j-2$}     \\
$\wedge$ &          &$\wedge$  & &                 &  $\wedge$ &          &$\wedge$  \\
\boldmath{$j-2$}    &$<$       &\boldmath{$j$}       & &                 &  \boldmath{$j-1$}    &$=$       &\boldmath{$j-1$}       \\
$\wedge$ &          &$\|$      & &                 &  $\wedge$ &          &$\wedge$      \\
\boldmath{$j-1$}    &$<$       &\boldmath{$j$}       & &                 &  \boldmath{$j$}      &$=$       &\boldmath{$j$}       \\

$\wedge$ &          &$\wedge$      & &                 &  $\wedge$ &          &$\wedge$      \\
$z$      &$<$    &$w$       & &                 &  $z$      &$<$       &$w$       

\end{tabular}

\

This \textit{block rotation} fixes the strictly increasing problem caused by the $j$'s in the second column of $T'$.
We make a few comments to justify the accuracy of the relations displayed in the above diagrams. 

First, a note on the relation $* \leq y$ that appears at the top of each diagram. We are fixing these {\tiny $\left[ \begin{array}{r}
          j\\
          j\\
       \end{array}\right]$} problems in $T'$ from top to bottom. Initially all rows of $T'$ strictly increase since the columns of $T$ were strictly increasing. 
Yet, after we perform one or more of these block rotations all rows contained in a given block, except the top row, will now display equality. 
However, we will soon show that any two blocks are disjoint (see p.~\pageref{nonoverlap}), hence all other row equalities appear strictly above the current block that is displayed.

Initially, the entries of the first column of $T'$ were strictly increasing. 
This gives that $x < j-i-1$ and $j-1 < z$. 
The first inequality gives $x< j-i$. 
Also, there are can only be two $j$'s in the diagram, we have that $z \neq j$. 
Therefore the second inequality gives that $j < z$.
This shows that the block rotation preserves the strictly increasing nature of the first column.

Since we are working from top to bottom, we know that the second column strictly increases above the $j$'s. 
This gives $y < j-i$ and $j<w$. 
Since we assumed that $j-i-1$ did not appear in the second column, the first inequality gives $y < j-i-1$.
Therefore the strictly increasing segment of the second column has been extended downwards.

Again, since we are working from top to bottom and the blocks are disjoint, the rows strictly increase for each row that the current block intersects. 
After rotating this block it is evident that we obtain row equalities for each of these rows except the top-most.

Thus we have seen how to resolve repeated elements in the second column that falls into the first case. 
We now turn to the second case.

\

\

\noindent \textbf{Case II:} \textit{The $j-1$ appears somewhere below the box immediately left of the bottom $j$.}

\

We define the block of entries exactly as in \textbf{Case I}. 
The only difference in this case is that the block of entries consists of a sub-block in the second column and a lower sub-block in the first column. 
These two sub-blocks can be row-disjoint from one another. 
We still must check that we obtain a SSYT after we rotate the block. 
The strict inequality down the columns follows for the same reasons as in the first case. 
The rows were strictly increasing to begin with since the columns of $\mathcal{T}_1$ strictly increase. 
We need to check that, after rotating, the rows still weakly increase. 
There are three cases to check, each of which is displayed in the following diagram.

\begin{enumerate}
\item The row of interest $r_1$ is above the sub-block in the first column.
\item The row of interest $r_2$ is within the sub-block in both columns.
\item The row of interest $r_3$ is below the sub-block in the second column.
\end{enumerate}

\

\setlength{\unitlength}{0.6mm}
\begin{picture}(100,100)(-10,0)

\put(20,100){$*$} \put(40,100){$j-i$}
\put(20.5,92){$\vdots$} \put(46,92){$\vdots$}
\put(20,85){$*$} \put(44,85){$y_1$}
\put(19,78){$x_1$} \put(45,78){$*$}
\put(20.5,70){$\vdots$} \put(46,70){$\vdots$}
\put(7,63){$j-i-1$} \put(45,63){$*$}
\put(20.5,55){$\vdots$} \put(46,55){$\vdots$}
\put(20,48){$*$} \put(44,48){$y_2$}
\put(20,41){$*$} \put(45,41){$*$}
\put(19,34){$x_2$} \put(45,34){$*$}
\put(20.5,26){$\vdots$} \put(46,26){$\vdots$}
\put(20,19){$*$} \put(45,19){$j$}
\put(20,12){$*$} \put(45,12){$j$}
\put(20.5,4){$\vdots$} \put(46,3){$\vdots$}
\put(20,-3){$*$} \put(44,-3){$y_3$}
\put(19,-10){$x_3$} \put(45,-10){$*$}
\put(20.5,-18){$\vdots$} \put(46,-18){$\vdots$}
\put(13,-25){$j-1$} \put(45,-25){$*$}

\put(4,-28){\line(0,1){97}}
\put(35,-28){\line(0,1){97}}
\put(4,-28){\line(1,0){31}}
\put(4,69){\line(1,0){31}}

\put(37,10){\line(0,1){96}}
\put(56,10){\line(0,1){96}}
\put(37,106){\line(1,0){19}}
\put(37,10){\line(1,0){19}}

\put(64,63){\textrm{Block}}
\put(60,55){\textrm{Rotation}}
\put(68,50){$\longrightarrow$}

\put(100,100){$*$} \put(114,100){$j-i-1$}
\put(100.5,92){$\vdots$} \put(126,92){$\vdots$}
\put(100,85){$*$} \put(125,85){$*$}
\put(99,78){$x_1$} \put(124,78){$y_1$}
\put(100.5,70){$\vdots$} \put(126,70){$\vdots$}
\put(94,63){$j-i$} \put(125,63){$*$}
\put(100.5,55){$\vdots$} \put(126,55){$\vdots$}
\put(100,48){$*$} \put(125,48){$*$}
\put(99,41){$x_2$} \put(124,41){$y_2$}
\put(100,34){$*$} \put(125,34){$*$}
\put(100.5,26){$\vdots$} \put(126,26){$\vdots$}
\put(100,19){$*$} \put(119,19){$j-1$}
\put(100,12){$*$} \put(125,12){$j$}
\put(100.5,4){$\vdots$} \put(126,3){$\vdots$}
\put(99,-3){$x_3$} \put(124,-3){$y_3$}
\put(100,-10){$*$} \put(125,-10){$*$}
\put(100.5,-18){$\vdots$} \put(126,-18){$\vdots$}
\put(99,-25){$j$} \put(125,-25){$*$}

\put(91,-28){\line(0,1){97}}
\put(110,-28){\line(0,1){97}}
\put(91,-28){\line(1,0){19}}
\put(91,69){\line(1,0){19}}

\put(112,10){\line(0,1){96}}
\put(141,10){\line(0,1){96}}
\put(112,106){\line(1,0){29}}
\put(112,10){\line(1,0){29}}

\put(0,76){\dashbox{1}(144,7)[tl]{ }}

\put(0,39){\dashbox{1}(144,7)[tl]{ }}

\put(0,-6){\dashbox{1}(144,8)[tl]{ }}

\put(-7,78){$r_1$}
\put(-7,41){$r_2$}
\put(-7,-3){$r_3$}

\put(158,-28){\line(0,1){38}}
\put(158,-28){\line(-1,0){3}}
\put(158,10){\line(-1,0){3}}
\put(168,-8){$k_1$}
\put(163,-16){\textrm{boxes}}

\put(150,-28){\line(0,1){67}}
\put(150,-28){\line(-1,0){3}}
\put(150,39){\line(-1,0){3}}
\put(160,30){$k_2$}
\put(155,22){\textrm{boxes}}

\end{picture}

\

\

\

\

\

\noindent Thus, in the left-hand diagram, the value $x_3$ may in fact be the bottom-most entry of the sub-block in the first column. That is, $x_3=j-1$ is a case considered by the various possible positions of $r_3$.

For the rows $r_1$, $r_2$, and $r_3$ shown, we wish to show $x_i \leq y_i$ for $i=1,2,3$. 
This will show that the rows weakly increase after the block rotation. 
Since the columns strictly increase we have
\[ x_1 < j-i-1 < j-i \leq y_1, \textrm{ and} \]
\[ x_3 \leq j-1 < j < y_3. \]
\noindent Further, with $k_1$ and $k_2$ as shown, we have $x_2=j-k_2$ and $y_2 = j-k_2+k_1$, hence
\[ x_2 \leq x_2 + k_1 =j - k_2 + k_1 = y_2.\]
\noindent Therefore the rows of the resulting tableau weakly increase.

\

We can now check that no two block rotations move the same elements. \label{nonoverlap} 
If we have performed a block rotation to fix a {\tiny $\left[ \begin{array}{r}
          j\\
          j\\
       \end{array}\right]$} problem then there cannot be two $j+1$'s below the new position of the $j$'s, since, if there were, then in $\mathcal{T}_1$ we would have read two $j+1$'s in the foundation $\lambda$ before reading any $j$'s, which would violate the lattice condition. 
Since the blocks are formed by extending consecutively increasing sequences, this shows that the next block could not overlap this block.
%later fix a $k,k$ problem, then we have $k \geq j+2$. For if $k=j+1$, then we would have read two $j+1$'s before reading any $j$'s in $T_2$, giving a contradiction.
Thus any two blocks are disjoint.

This process of block rotation removes the pairs of repeating elements in the second column while maintaining the strict inequalities of the two columns up to that point and never violates the weakly increasing restrictions on the rows. 
By working top to bottom and repeating this process for each pair of repeated elements in the second column we shall produce a SSYT $T''$ of shape $\lambda^{t}$.

\newpage

\noindent \textbf{Step 4:} \textit{Extend from shape $\lambda^{t}$ to a SSYT of shape $\mathcal{S}(\lambda^{t},k,n)$} ($T'' \rightarrow \mathcal{T}_2$)

\

Using $T''$ and the unique semistandard filling of $\Delta_n$ with lattice reading word, we obtain a SSYT $T_2$ of shape $\lambda^t \oplus \Delta_n$.

We also claim that ${T}_2$ has a lattice reading word. 
In each case there is at most one $n+2$ in ${T}_2$. 
If an $n+2$ does appear, then it appeared at the end of the second column of $T$, and none of the block rotations in \textbf{Step 3} moved its position. 
Further, since $\mathcal{T}_1$ was lattice, we know that an $n+1$ must have appeared at the end of the first column of $T$. 
If $\lambda_1>\lambda_2$ then Case 2(a) placed the $n+2$ at the bottom of the first column and placed the $n+1$ at the bottom of the second column. 
The only way this $n+1$ could have moved in \textbf{Step 3} was if there was another $n+1$ above it. 
In either case, an $n+1$ remains at the end of the second column, so we read this $n+1$ before reading the $n+2$.
Similarly, if $\lambda_1=\lambda_2$ then Case 2(b) placed the $n+2$ at the bottom of the second column and placed the $n+1$ directly above it. 
Again, the only way this $n+1$ could have moved in \textbf{Step 3} was if there was another $n+1$ above it, 
and in either case, an $n+1$ remains above the $n+2$, so we read this $n+1$ before the $n+2$.
Therefore the single $n+2$ that occurs does not violate the lattice condition.

Nevertheless, suppose that the reading word of ${T}_2$ is not lattice. 
Let $j+1$ be the entry that, when read, violated the lattice condition. 
We know that the content after reading $\Delta_n$ is $(n,n-1,\ldots,3,2,1)$, 
so we must have read at least two more $j+1$'s than $j$'s since we finished reading $\Delta_n$. 
Since there are only two column for these two $j+1$'s to appear in, there can only be two $j+1$'s. 
Thus we must have read exactly two $j+1$'s and no $j$'s since we finished reading $\Delta_n$. 
Let the values of the columns be $x_1,x_2,\ldots x_{\lambda_1}$ and $y_1,y_2,\ldots,y_{\lambda_2}$ respectively. Thus $x_i = j+1$ and $y_k = j+1$ for some $i \geq k$.

We know $j \not\in \{x_{i}, x_{i+1}, \ldots, x_{\lambda_1} \} \cup \{y_k, y_{k+1}, \ldots, y_{\lambda_2} \}$ since the columns strictly increase. But we also know $j \not\in \{x_1,x_2,\ldots,x_i \} \cup \{y_1,y_2,\ldots,y_i\}$ since by assumption we had not read any $j$'s when reaching the second $j+1$. Thus 
\[j \not\in \{x_1,x_2,\ldots,x_i \} \cup \{y_1,y_2,\ldots,y_i\} \cup \{x_{i}, x_{i+1}, \ldots, x_{\lambda_1} \} \cup \{y_k, y_{k+1}, \ldots, y_{\lambda_2} \}, \]
which is to say $j \not\in \{x_1,x_2,\ldots x_{\lambda_1}, y_1,y_2, \ldots, y_{\lambda_2}\}$. This is a contradiction since $\{x_1,x_2,\ldots x_{\lambda_1}, y_1,y_2, \ldots, y_{\lambda_2}\} = \{a_1,a_2,\ldots a_{\lambda_1}, b_1,b_2, \ldots, b_{\lambda_2}\}$ and the fact that there are two $j+1$'s in the foundation of $\mathcal{T}_1$ implies there was at least one $j$ as well, else $\mathcal{T}_1$ could not have a lattice reading word.
Therefore the SSYT ${T}_2$ has a lattice reading word.

Thus we may apply Lemma~\ref{fatjoinlemma} to obtain a SSYT $\mathcal{T}_2$ of shape $\mathcal{S}(\lambda^t,k,n)$ with lattice reading word.

\

\

\noindent \textbf{Check:} \textit{Can We Recover $\mathcal{T}_1$?}

\

We claim that $\mathcal{T}_1$ can be recovered from $\mathcal{T}_2$. 
\textbf{Step 4} can be undone by passing back to the foundation of $\mathcal{T}_2$. 
The swapping performed in \textbf{Step 2} is easily detected and reversed. 
Undoing \textbf{Step 1} requires a mere transposition of diagrams. 
So to prove the claim it remains to check that the block rotations in \textbf{Step 3} are reversible. 
Since we have noted that no two block rotations move a common element, it is enough to check that each of the two cases are reversible. 
Hence we look at reversing both cases after fixing a {\tiny $\left[ \begin{array}{r}
          j\\
          j\\
       \end{array}\right]$} problem.

If we made such a manipulation in \textbf{Case I} of \textbf{Step 3} (i.e. $j-1$ \textit{was} immediately left of the bottom $j$), then the foundation of $\mathcal{T}_2$ \textit{now} has a row containing {\tiny $\left[ \begin{array}{rr}
          j & j\\
          \end{array}\right]$}. 
By transposing the foundation we obtain a tableau of shape $\lambda$ with a column {\tiny $\left[ \begin{array}{r}
          j\\
          j\\
       \end{array}\right]$}. 
We now search to the left of this column until we come upon a nonrepeating column {\tiny $\left[ \begin{array}{c}
          x\\
          j-i-1\\
       \end{array}\right]$} where $x<j-i-1$. This determines the block of entries we wish to rotate. 
We then rotate the entries clockwise to undo the original block rotation. % as before (see below).

\

\

\noindent \begin{tabular}{ccc ccc ccc ccc ccc }
  $*$      & $<$ &  $x$      & $<$ & \boldmath{$j-i$} & $<$ &  $\ldots$ & $<$ & \boldmath{$j-1$} & $<$ & \boldmath{$j$}  &$<$&$*$      \\
  $\wedge$ &     &  $\wedge$ &     & $\|$  &     &  $\ldots$ &     &  $\|$ &     & $\|$ &   &$\wedge$        \\
  $y$      & $<$ &  \boldmath{$j-i-1$}  & $<$ & \boldmath{$j-i$} & $<$ &  $\ldots$ & $<$ & \boldmath{$j-1$} & $<$ & \boldmath{$j$}  &$<$&$*$      \\
&&&&&&&&&&&& \\
&&&&&$\downarrow$&&&&&&& \\
&&&&&&&&&&&& \\
  $*$      & $<$ &  $x$      & $<$ & \boldmath{$j-i-1$} & $<$ &  $\ldots$ & $<$ & \boldmath{$j-2$} & $<$ & \boldmath{$j-1$} &$<$&$*$      \\
  $\wedge$ &     &  $\wedge$ &     & $\wedge$  &     &  $\ldots$ &     &  $\wedge$ &     & $\wedge$ &   &$\wedge$   \\
  $y$      & $<$ &  \boldmath{$j-i$}  & $<$ & \boldmath{$j-i+1$} & $<$ &  $\ldots$ & $<$ & \boldmath{$j$} & $=$ & \boldmath{$j$}    &$<$&$*$

\end{tabular}

\

\

If we made such a manipulation in the second case (i.e. $j-1$ was below the left of the bottom $j$), then upon transposing the foundation of $\mathcal{T}_2$ and reading its entries we come across two $j-i$'s before reading the $j-i-1$. This determines the smallest element $j-i-1$ in the block we wish to form. 
The $j-i-1$ necessarily appears in the second row, immediately to the left of the second $j-i$ that we have just read. The remaining elements of the block are the sequence of consecutive integers $j-i,j-i+1,\ldots,j$ to its right and the copy of this sequence in the first row.
The consecutive sequence cannot be further extended to the right, otherwise, in the foundation of $\mathcal{T}_1$, we would have read two $j+1$'s before reading either of the $j$'s. 
Therefore the correct endpoint $j$ for this block of entries can be correctly determined. % as there cannot be a $j+1$ to the right of both $j$'s. 
Again we rotate the elements in the block clockwise to recover the original. 
Therefore we can undo the block rotations of \textbf{Step 3}. 
Hence $\mathcal{T}_1$ can be recovered from $\mathcal{T}_2$. 
This completes the proof that $s_{\mathcal{S}(\lambda^{t}, k,n)}-s_{\mathcal{S}(\lambda,k,n)} \geq_s 0$.

An example of block rotations is shown after the remainder of the proof.

\

\

\noindent \textbf{2.}

We now show that we can reverse the process described in the proof of \textbf{1} when we restrict to fillings using the numbers $\{1,2,\ldots, n+1\}.$
If we take any SSYT $\mathcal{T}_2$ of shape $\mathcal{S}(\lambda^{t},k,n)$ with lattice reading word then the foundation $\lambda^{t}$ consists of two strictly increasing columns. % as depicted in the following:

\begin{center}
\begin{tabular}{ccc}

$a_1$ & $<$ & $b_1$ \\
$\wedge$ & & $\wedge$ \\
$a_2$ & $\leq$ & $b_2$ \\
$\wedge$ & & $\wedge$ \\
& $\vdots$ & \\
$\wedge$ & & $\wedge$ \\
$a_{\lambda_2-1}$ & $\leq$ & $b_{\lambda_2-1}$  \\
$\wedge$ & & $\wedge$\\
$a_{\lambda_2}$ & $\leq$ & $b_{\lambda_2}$ \\
$\wedge$ & &  \\
$\vdots$ &&  \\
$\wedge$ & &  \\
$a_{\lambda_1}$ & & 
\end{tabular}
\end{center}

Since the columns strictly increase we may see a pair of equal elements $a_i=b_i$ in $\lambda^{t}$, but never three equal values. 
We have $a_1 < b_1$ by Lemma~\ref{firstrowlem}, which implies that each $b_j \geq 2$. 
We shall create the desired tableau $\mathcal{T}_1$ of shape $\mathcal{S}( \lambda, k,n )$ using three steps that we shall describe presently.

In \textbf{Step A} we will transpose the foundation of $\mathcal{T}_2$, giving us a tableau $T$ of shape $\lambda$, which is not necessarily semistandard. 
%In \textbf{Step 2:} we will see if $n+2$ appears in our tableau $T$. If it does, we will swap its position so that it will be read after an $n+1$. This gives us a tableau $T'$ of shape $\lambda^t$ (not necessarily semistandard).
In \textbf{Step B} we will fix any places in $T$ where the lattice condition would fail if we appended $T$ to $\Delta_n$ by rotating certain blocks of entries. 
After completing this step, we will have a SSYT $T'$ of shape $\lambda$.
In \textbf{Step C} we will append $T'$ to $\Delta_n$, creating a SSYT $\mathcal{T}_1$ of shape $\mathcal{S}( \lambda, k,n )$.
During \textbf{Step C}, we will have shown that $\mathcal{T}_1$ has a lattice reading word. 
Finally, we show that $\mathcal{T}_2$ can be recovered from $\mathcal{T}_1$.

\

\noindent \textbf{Step A:} \textit{Transpose Foundation}  ($\mathcal{T}_2 \rightarrow T$)

Let us consider the tableau $T$ that is obtained by transposing the entries of the foundation of $\mathcal{T}_2$. Then $T$ is a tableau of shape $\lambda$.
\begin{eqnarray*}
\  &  & a_1 <a_2 <\ldots <a_{\lambda_2} < \ldots < a_{\lambda_1}
\\ &  & \textrm{ } \wedgeline \textrm{ } \textrm{ } \textrm{ } \textrm{ } \textrm{ } \textrm{ } \textrm{ } \wedgeline \textrm{ } \textrm{ } \textrm{ } \textrm{ }  \textrm{ } \ldots \textrm{ } \textrm{ } \textrm{ } \textrm{ } \textrm{ } \wedgeline
\\ &  & b_1 < b_2 \textrm{ } < \ldots < b_{\lambda_2}
\\
\end{eqnarray*}

\noindent \textbf{Step B:} \textit{Fix Future Lattice Problems} ($T \rightarrow T'$)

In this step we move certain entries of $T$ so that the resulting tableau of shape $\lambda$ can be extended to a tableau $\mathcal{T}_1$ of shape $\mathcal{S}(\lambda, k,n)$ whose reading word is lattice. 
Given $T$, the only problems that may arise is that, when reading the entries of $T$, we come to a point where we have read two $j$'s before reading any $j-1$'s. We call this a \textit{future lattice problem}. We look at how to remedy these problems.

Let $j$ be the first value that we have read twice without reading any $j-1$. 
In this case we say that we have a $j,j$ problem. 
Since $\mathcal{T}_2$ has a lattice reading word, a $j-1$ must have appeared somewhere in $T$. 
The only place we have not read is to the left of the $j$ in the second row. 
Since the rows of $T$ weakly increase (in fact strictly increase, to begin with), the $j-1$ must appear immediately to the left of the $j$ in the bottom row. 
We note that we must have $j \geq 3$, since 1 cannot appear in the bottom row. 
We now search to the right of both $j$'s and find the largest consecutive sequence $j,j+1,\ldots,j+i$ that appears in both rows. 
These sequences together with the entry $j-1$ define a \textit{block}.
There are two cases to consider:

\begin{enumerate}
\item The two $j$'s are in the same column.
\item The top $j$ appears to the right of the bottom $j$.
\end{enumerate}
\noindent We note that the top $j$ cannot occur to the left of the bottom $j$, otherwise the entry $m$ above the bottom $j$ satisfies $j < m \leq j$.
The two cases are displayed below.

\setlength{\unitlength}{0.55mm}
\begin{picture}(0,55)(20,60)

\put(36,100){$x$} \put(52,100){$j$}       \put(62,100){$j+1$}  \put(79,100){$\cdots$} \put(118,100){$\cdots$} \put(129,100){$j+i-1$} \put(160,100){$j+i$} \put(182,100){$z$}

\put(18,85){$y$} \put(30,85){$j-1$}        \put(52,85){$j$}  \put(62,85){$j+1$} \put(79,85){$\cdots$} \put(118,85){$\cdots$}  \put(129,85){$j+i-1$} \put(160,85){$j+i$} \put(182,85){$w$}

\put(115,70){or}

\put(73,55){$x$} \put(87,55){$j$}       \put(97,55){$j+1$}  \put(115,55){$\cdots$} \put(156,55){$\cdots$} \put(169,55){$j+i-1$} \put(200,55){$j+i$} \put(226,55){$z$}

\put(18,40){$y$} \put(30,40){$j-1$}        \put(52,40){$j$}  \put(62,40){$j+1$} \put(79,40){$\cdots$} \put(118,40){$\cdots$}  \put(129,40){$j+i-1$} \put(160,40){$j+i$} \put(182,40){$w$}

\put(26,82){\framebox(152,10)[tl]{ }}
\put(48,97){\framebox(130,10)[tl]{ }}

\put(81,52){\framebox(140,10)[tl]{ }}

\put(27,37){\framebox(151,10)[tl]{ }}

\end{picture}

\

\

\

\

After rotating the entries of either block clockwise the columns will strictly increase. 
We inspect the relations of the rows in both of the two cases simultaneously. 
We may do this since the entries of the rows are the same in each case. % and we are not interested in how the two rows overlap. 
We display the rotation in the second case. 
The entries of the block are rotated one position clockwise as shown.

\setlength{\unitlength}{0.55mm}
\begin{picture}(0,55)(20,60)

\put(73,100){$x$} \put(87,100){$j$}       \put(97,100){$j+1$}  \put(115,100){$\cdots$} \put(156,100){$\cdots$} \put(169,100){$j+i-1$} \put(200,100){$j+i$} \put(226,100){$z$}

\put(18,85){$y$} \put(30,85){$j-1$}        \put(52,85){$j$}  \put(62,85){$j+1$} \put(79,85){$\cdots$} \put(118,85){$\cdots$}  \put(129,85){$j+i-1$} \put(160,85){$j+i$} \put(182,85){$w$}

\put(115,70){$\downarrow$}

\put(73,55){$x$} \put(84,55){$j-1$}       \put(105,55){$j$}  \put(114,55){$\cdots$} \put(152,55){$\cdots$} \put(163,55){$j+i-2$} \put(194,55){$j+i-1$} \put(226,55){$z$}

\put(19,40){$y$} \put(32,40){$j$}        \put(41,40){$j+1$}  \put(61,40){$j+2$} \put(79,40){$\cdots$} \put(123,40){$\cdots$}  \put(135,40){$j+i$} \put(158,40){$j+i$} \put(182,40){$w$}

\put(26,82){\framebox(152,10)[tl]{ }}
\put(82,97){\framebox(139,10)[tl]{ }}

\put(81,52){\framebox(140,10)[tl]{ }}

\put(27,37){\framebox(151,10)[tl]{ }}

\end{picture}

\

\

\

\

Since both rows strictly increase to begin with, we have $x < j$, $y < j-1$, $j+i< z$, and $j+i< w$.
Therefore we have the inequalities $y < j$, $j+i-1 < z$ and $j+i < w$. 
Further, since there was no $j-1$ in the first row, we have $x<j-1$. 
Hence the only pair of equal values in a row that is created by this block rotation is the pair $j+i,j+i$ in the bottom-right of the block.

Initially, the columns of $T$ were weakly increasing. 
Now consider any column of $T$ with a pair of equal values, say {\tiny $\left[ \begin{array}{c}
                                                          m \\
                                                          m \\
                                                         \end{array} \right]$}. % give rise to a future lattice problem. 
Since the reading word for $\mathcal{T}_2$ is lattice, there is a $m-1$ in $T$. 
This $m-1$ must occur in the column immediately to the left of the {\tiny $\left[ \begin{array}{c}
                                                          m \\
                                                          m \\
                                                         \end{array} \right]$}. 
If this $m-1$ appears in the first row of $T$, then since the colums weakly increase and there are only two $m$'s in $T$ we find that a second $m-1$ appears immediately below it.
Thus we obtain a column {\tiny $\left[ \begin{array}{c}
                                                          m-1 \\
                                                          m-1 \\
                                                         \end{array} \right]$}.
We continue extending this block of equal valued columns to the left.
It terminates when we reach some column {\tiny $\left[ \begin{array}{c}
                                                          j \\
                                                          j \\
                                                         \end{array} \right]$} 
where a $j-1$ appears immediately left of this column in the second row, but not in the first row.
\
\begin{center}
\noindent \begin{tabular}{ccccc ccccc ccc}
    $x$               & $<$     & \boldmath{$j$}   & $<$ &  $\ldots$ & $<$ & \boldmath{$m-1$} & $<$ & \boldmath{$m$}   \\ %& $<$ &  $\ldots$  \\ %& $<$  & \boldmath{$j+i-1$} & $<$ & \boldmath{$j+i$}        \\
     $\wedge$         &         & $\|$             &     &  $\ldots$ &     & $\|$           &     & $\|$               \\ %&     &  $\ldots$ \\ %&      & $\|$               &     & $\|$           \\
    \boldmath{$j-1$}  & $<$     & \boldmath{$j$}   & $<$ &  $\ldots$ & $<$ & \boldmath{$m-1$} & $<$ & \boldmath{$m$}   \\ %& $<$ &  $\ldots$ \\ %& $<$  & \boldmath{$j+i-1$} & $<$ & \boldmath{$j+i$}        \\
\end{tabular}
\end{center}
\

\noindent This gives rise to a future lattice problem. 
Thus all columns in $T$ that did not strictly increase are contained in some block. 
We now check that the columns of each block strictly increase after performing the block rotation. There are three cases to consider.

\begin{enumerate}
\item The column of interest $c_1$ is left of the sub-block in the first row.
\item The column of interest $c_2$ is within the sub-block in both rows.
\item The column of interest $c_3$ is right of the sub-block in the second row.
\end{enumerate}

\

\setlength{\unitlength}{0.54mm}
\begin{picture}(0,55)(37,60)

\put(46,100){$\cdots$} \put(57,100){$x_1$} \put(66,100){$*$} \put(72,100){$\cdots$} \put(90,100){$j$} \put(102,100){$\cdots$} \put(115,100){$x_2$} \put(127,100){$*$} \put(137,100){$*$} \put(148,100){$\cdots$}    \put(184,100){$\cdots$}  \put(195,100){$x_3$}   \put(205,100){$*$}  \put(212,100){$\cdots$}      \put(228,100){$j+i$} 

\put(46,85){$\cdots$} \put(58,85){$*$} \put(65,85){$y_1$} \put(72,85){$\cdots$} \put(28,85){$j-1$}  \put(102,85){$\cdots$}  \put(117,85){$*$} \put(127,85){$*$} \put(135,85){$y_2$} \put(148,85){$\cdots$}    \put(160,85){$j+i$}  \put(184,85){$\cdots$}  \put(196,85){$*$}   \put(204,85){$y_3$}  \put(212,85){$\cdots$}

\put(115,70){$\downarrow$}

\put(46,55){$\cdots$} \put(57,55){$x_1$} \put(66,55){$*$} \put(72,55){$\cdots$}  \put(84,55){$j-1$}   \put(103,55){$\cdots$} \put(117,55){$*$} \put(125,55){$x_2$} \put(137,55){$*$} \put(148,55){$\cdots$}        \put(184,55){$\cdots$}  \put(196,55){$*$}   \put(204,55){$x_3$}  \put(212,55){$\cdots$}        \put(224,55){$j+i-1$} 

\put(46,40){$\cdots$} \put(57,40){$y_1$} \put(66,40){$*$} \put(72,40){$\cdots$}  \put(35,40){$j$}      \put(103,40){$\cdots$} \put(117,40){$*$} \put(125,40){$y_2$} \put(137,40){$*$} \put(148,40){$\cdots$}   \put(158,40){$j+i$}    \put(184,40){$\cdots$}  \put(196,40){$*$}   \put(204,40){$y_3$}  \put(212,40){$\cdots$}

\put(26,82){\framebox(152,10)[tl]{ }}
\put(82,97){\framebox(169,10)[tl]{ }}

\put(81,52){\framebox(170,10)[tl]{ }}

\put(27,37){\framebox(151,10)[tl]{ }}

\put(84,73){\textrm{Block}}
\put(80,65){\textrm{Rotation}}

\put(202,33){\dashbox{1}(9,78)[tl]{ }}

\put(123,33){\dashbox{1}(10,78)[tl]{ }}

\put(55,33){\dashbox{1}(9,78)[tl]{ }}

\put(57,115){$c_1$}
\put(126,115){$c_2$}
\put(204,115){$c_3$}

\put(27,21){\line(1,0){54}}
\put(27,21){\line(0,1){3}}
\put(81,21){\line(0,1){3}}

\put(45,11){$k_1$}
\put(40,3){\textrm{boxes}}

\put(27,27){\line(1,0){96}}
\put(27,27){\line(0,1){3}}
\put(123,27){\line(0,1){3}}

\put(105,18){$k_2$}

\put(100,10){\textrm{boxes}}

\end{picture}

\

\

\

\

\

\

\

\

\

For the columns $c_1$, $c_2$, and $c_3$ we wish to show that $x_i < y_i$ for $i=1,2,3$. 
Since the rows were strictly increasing to begin with we have 
\[ x_1 < j \leq y_1, \textrm{ and}\]
\[ x_3 < j+i < y_3.\]
Further, for $k_1$ and $k_2$ shown, we have $x_2=j+k_2-k_1-1$ and $y_2=j+k_2$ hence
\[ x_2 \leq x_2 +k_1 = j+k_2-1 < y_2.\]

It can be seen that no two of these block rotations performed in \textbf{Step B} can move the same elements. 
If we have done a manipulation to fix a $j,j$ problem then there cannot be a second $j-1$ immediately left of the bottom-left element of this block since, otherwise, before rotating that block the row had the value $j-1$ repeated occuring on the bottom-left of the block. 
However, we know that the rows were strictly increasing to begin with and the only repeated values we have introduced were on the bottom-right of each block we have rotated. 
So since we are working right to left, this repeated value $j-1,j-1$ could not occur. %we have not moved any entries to the left of this block. % the new position of the $j$'s, since, if there were, then in $\mathcal{T}_1$ we would have read two $j+1$'s in the foundation $\lambda$ before reading any $j$'s. 
Therefore, after rotating this block, there is no $j-1$ in the bottom row, and hence the next block's pair of consecutively increasing sequences cannot enter this block. 
Hence the two blocks are disjoint.

We continue this process of reading values and fixing future lattice problems by rotating blocks until there are no more future lattice problems left in the tableau. This results in a SSYT $T'$ of shape $\lambda$.

\

\noindent \textbf{Step C:} \textit{Extend from shape $\lambda$ to SSYT of shape $\mathcal{S}(\lambda,k,n)$} ($T' \rightarrow \mathcal{T}_1$)

\

Using $T'$ and the unique semistandard filling of $\Delta_n$ with lattice reading word, we obtain a SSYT $T_1$ of shape $\lambda \oplus \Delta_n$. 
We check that ${T}_1$ is a SSYT with a lattice reading word.
Since we removed all future lattice problems, $T'$ has the property that, when read, the number of $j+1$'s is always at most one more than the number of $j$'s. 
Thus the SSYT ${T}_1$ does have a lattice reading word. 
Therefore, applying Lemma~\ref{fatjoinlemma}, we obtain a SSYT $\mathcal{T}_1$ of shape $\mathcal{S}(\lambda,k,n)$ with lattice reading word.

\

\noindent \textbf{Check:} \textit{Can we recover $\mathcal{T}_2$?}

\

Finally, we must show that we can recover $\mathcal{T}_2$ from $\mathcal{T}_1$. 
Undoing \textbf{Step A} and \textbf{Step C} are trivial. 
Thus we need only show that the block rotations in \textbf{Step B} can be reversed. 
Since any two block rotations are disjoint, it suffices to check that a single block rotation can be undone. 
We have seen that a block rotation creates a tableau with a repeated value $j+i,j+i$ in the bottom-right of the block. 
Transposing the tableau, there is a repeated value in the second column and so the block rotation described in \textbf{Step 3} applies and it results in the transpose of the original tableau. 
Therefore these block rotations can be reversed, and we can recover $\mathcal{T}_2$ from $\mathcal{T}_1$.

Thus we have shown that $s_{\mathcal{S}(\lambda,k,n)}-s_{\mathcal{S}(\lambda^{t},k,n)} \geq_{s} 0$ as functions in $n+1$ variables. This and \textbf{1} completes the proof that $s_{\mathcal{S}(\lambda^{t},k,n)} =_{n+1} s_{\mathcal{S}(\lambda,k,n)}$.

\

\noindent \textbf{3.}

We now wish to show that the two skew Schur functions are distinct when there is at least $n+2$ variables. Hence we consider fillings using the values $\{1,2,\ldots, n+2 \}$. 

Let $l(\lambda)=2$ and $\lambda \neq \lambda^{t}$. 
The hypotheses of the theorem required $\lambda_1 >1$, so that $\lambda$ was not a single column. 
If $\lambda_1 =2$, then $\lambda$ is either the shape $(2,1)$ or $(2,2)$, which both have $\lambda^t = \lambda$. 

Thus we need only consider $\lambda_1 \geq 3$. 
We split into three cases depending on the size of $\lambda_2$. % we have the following possibilities:
Namely,  

\begin{enumerate}
\item $\lambda_2=1$,
\item $\lambda_2=2$, and
\item $\lambda_2 \geq 3$.
\end{enumerate}

\noindent In each case we wish to create a SSYT $\mathcal{T}$ of shape $\mathcal{S}(\lambda^{t},k,n)$ with lattice reading word and content $\nu$ such that no SSYT of shape $\mathcal{S}(\lambda,k,n)$ with lattice reading word and content $\nu$ exists. 
Here we show the fillings of the foundation of the required tableau $\mathcal{T}$ for each of the above three cases.

\

\begin{tabular}{ccccccccccc}
&1.& &&&2.&  &&&3.&\\
&& && &&  &&&&\\
$2$ &  & $n+1$ && $2$ &  & $n+1$  && $2$ &  & $3$\\
$3$ &  &  && $3$ &  & $n+2$  && $3$ &  & $4$\\
$\vdots$ &  & && $\vdots$ &  &  && $\vdots$ &  &$\vdots$\\
$\lambda_1-2$ &  &  && $\lambda_1-2$ & &   && $\lambda_2-2$ &  & $\lambda_2-1$\\
$n$ &  & && $n$ &  &   && $\lambda_2-1$ &  & $n$\\
$n+1$ &&  && $n+1$ &&  &&$\lambda_2$ && $n+1$\\
$n+2$ & &  && $n+2$ & & && $\lambda_2+1$ && $n+2$ \\
&&&&&&&& $\vdots$  && \\
&&&&&&&&  $\lambda_1-2$ && \\
&&&&&&&&  $\lambda_1-1$ && \\
&&&&&&&&  $n+1$ && \\
&&&&&&&&  $n+2$ && 
\end{tabular}

\

If $\lambda_1 =3$ in case 1 or 2, then the first column consists of the three entries $n, n+1, n+2$, and if $\lambda_1 =3$ in case 3, then the first column consists of $2, n+1, n+2$. 
Similarly, if $\lambda_2 = 3$ in case 3, then the second column consists of $n, n+1, n+2$.

In each case it is clear that $\mathcal{T}$ is a SSYT with a lattice reading word. 
We now check that no SSYT of shape $\mathcal{S}(\lambda,k,n)$ with lattice reading word and the same content can exist. 
Any SSYT of shape $\mathcal{S}(\lambda,k,n)$ with lattice reading word can only have a single $n+1$ in the first row of $\lambda$ and a single $n+2$ in the second row of $\lambda$. Further, no $n+2$ can appear in the first row of $\lambda$. 
Thus, in Case 1, the $n+2$ must appear in the second row of $\lambda$, but then both $n+1$'s would have to appear in the first row, which is impossible. In Case 2 and Case 3, both $n+2$'s would have to fit in the second row of $\lambda$, which is impossible. Therefore $s_{\mathcal{S}(\lambda^{t},k,n)} \neq_{n+2} s_{\mathcal{S}(\lambda, k,n)}$. \qed

\end{proof}

\begin{example}

\

Here we illustrate a concrete example of \textbf{Step 3} and reversing \textbf{Step 3} in the proof of Theorem~\ref{2theorem}. Let us begin with the foundation of $\mathcal{S}((7,6), 0,n)$, where $n \geq 11$.

\

\begin{tabular}{ccccccc}
2&4&5&6&9&10&12    \\
6&7&7&8&11&11&

\end{tabular}

\

\noindent We now tranpose and perform \textbf{Step 3} twice. 
For convenience, in each step we highlight the block that is about to be rotated.

\

\begin{tabular}{cc ccc cc ccc cc}
2&\textbf{6}          &&               &  &2&5                      &&               & &2&5  \\
4&\textbf{7}          &&               &  &4&6                      &&               & &4&6  \\
\textbf{5}&\textbf{7} && $\rightarrow$ &  &6&7                      && $\rightarrow$ & &6&7  \\
\textbf{6}&8          &&               &  &7&8                      &&               & &7&8  \\
9&11                  &&               &  &9&\textbf{11}            &&               & &9&10 \\
10&11                 &&               &  &\textbf{10}&\textbf{11}  &&               & &11&11\\
12&                   &&               &  &12&                      &&               & &12&

\end{tabular}

\

\

To reverse \textbf{Step 3} to return to the original tableau we transpose and then either follow the rules for rotating discussed in the \textbf{Check} section of the proof of part \textbf{1} of Theorem~\ref{2theorem} or, equivalently, follow the the rules for rotating discussed in the \textbf{Step B} section of the proof of part \textbf{2} of Theorem~\ref{2theorem}.

\

\begin{tabular}{ccccccc}
2&4&6&7&9 &\textbf{11}&12    \\
5&6&7&8&\textbf{10}&\textbf{11}&     \\
&&&&&& \\
&&&$\downarrow$&&&\\
&&&&&&\\
2&4&\textbf{6}&\textbf{7}&9&10&12    \\
\textbf{5}&\textbf{6}&\textbf{7}&8&11&11&     \\
&&&&&& \\
&&&$\downarrow$&&&\\
&&&&&&\\
2&4&5&6&9&10&12    \\
6&7&7&8&11&11&     \\

\end{tabular}

\

\end{example}

\subsection{Hook Partitions}

We now inspect the case when the foundation is a hook diagram.

\begin{theorem}
\label{1hooktheorem}
If $\lambda=(\lambda_a,1^{\lambda_l - 1})$ is a hook with $\lambda_l \leq \lambda_a$ then for $0 \leq k \leq 1$ we have 
\begin{enumerate}
\item $s_{\mathcal{S}(\lambda^{t}, k,n)}-s_{\mathcal{S}(\lambda,k,n)} \geq_s 0,$ 
\item $s_{\mathcal{S}(\lambda^{t}, k,n)}=_{n+1}s_{\mathcal{S}(\lambda,k,n)},$ 
\item $s_{\mathcal{S}(\lambda^{t}, k,n)} \neq_{n+2} s_{\mathcal{S}(\lambda,k,n)}$ if $\lambda \neq \lambda^{t}$. 
\end{enumerate}
Furthermore, for $0 \leq k \leq 1$, we have 
\[ s_{\mathcal{S}(\lambda^{t}, k,n)}-s_{\mathcal{S}(\lambda,k,n)} = 
\sum_{B,C} \left[ \left( \begin{array}{c}
                  |B|-|C|-1 \\
                    \lambda_l -|C| -1\\
                    \end{array} \right)  
            -   \left( \begin{array}{c}
                  |B|-|C|-1 \\
                    \lambda_a -|C| -1\\
                    \end{array} \right)
         \right] s_{\nu(B,C)} \]
where $\nu(B,C) = \delta_n+ \sum_{b \in B} e_b + \sum_{c \in C} e_c + (0^{n+1},1^{|\lambda|-|B|-|C|})$ and
the sum is over all sets $B$, $C$ such that
\begin{itemize}
\item $n+1 \in B \subseteq \{ 2-k,3-k,\ldots,n+1 \}$, $C \subseteq \{ 3-k,4-k,\ldots,n+1 \}$,
\item $C \subset B$
\item $|B|+|C| \leq |\lambda|-1$, $|B| \geq \lambda_a$, $|C|+1 \leq \lambda_l$, and
\item if $C=\bigcup_{j=1}^{m} C_j$ where the $C_j$ are the maximal disjoint intervals of $C$, then $\textrm{min}(C_j)-1 \in B-C$ for each $j$.
\end{itemize}
\end{theorem}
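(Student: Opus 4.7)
The plan is to follow the template of Theorems~\ref{1theorem} and~\ref{2theorem}: build a content-preserving injection from SSYTx on $\mathcal{S}(\lambda,k,n)$ into those on $\mathcal{S}(\lambda^t,k,n)$ for Part~\textbf{1}, upgrade it to a bijection when entries are restricted to $\{1,\ldots,n+1\}$ for Part~\textbf{2}, exhibit a specific witness using an entry $n+2$ for Part~\textbf{3}, and enumerate the remaining contents to prove the formula. Given a SSYT $\mathcal{T}_1$ of shape $\mathcal{S}(\lambda,k,n)$ with lattice reading word, Lemma~\ref{firstrowlem} forces the arm of the foundation to be a strictly increasing sequence $a_1<a_2<\cdots<a_{\lambda_a}$ in $\{2-k,\ldots,n+1\}$, while the leg is a strictly increasing column $a_1<b_2<\cdots<b_{\lambda_l}$ whose entries may exceed $n+1$. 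I would attempt to form a tableau $T$ of shape $\lambda^t$ whose new arm (length $\lambda_l$) is $(a_1,b_2,\ldots,b_{\lambda_l})$ and whose new leg (length $\lambda_a$) is $(a_1,a_2,\ldots,a_{\lambda_a})$, so that strict increase in each line and consistency at the corner are automatic.

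The only obstruction to extending $T$ to a SSYT on $\mathcal{S}(\lambda^t,k,n)$ with lattice reading word is that the new reading order (new arm right-to-left, then new leg top-to-bottom) may place some $j+1$ strictly before all copies of $j$, especially when the $b_i$ reach above $n+1$. I would repair this by mimicking Steps~\textbf{2} and~\textbf{3} of the proof of Theorem~\ref{2theorem}: isolate each violation, form a maximal consecutive-increasing block of entries, rotate it clockwise, and verify that the rotation preserves the semistandard condition on the new arm and new leg as well as the lattice condition. As in Theorem~\ref{2theorem}, distinct blocks are forced to be disjoint and each rotation is reversible, which simultaneously yields the injection for Part~\textbf{1} and the bijection for Part~\textbf{2} when entries lie in $\{1,\ldots,n+1\}$. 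For Part~\textbf{3}, when $\lambda\neq\lambda^t$ we have $\lambda_a>\lambda_l$; if $\lambda_l=1$ the case reduces to Theorem~\ref{1theorem}, and otherwise take the content $\nu$ obtained by adding to $\delta_n$ one copy each of $2-k,3-k,\ldots,\lambda_l-k,n+1,n+2,\ldots,n+\lambda_a$. On $\mathcal{S}(\lambda^t,k,n)$ this is realized by arm $(2-k,3-k,\ldots,\lambda_l-k,n+1)$ and leg $(2-k,n+2,n+3,\ldots,n+\lambda_a)$, while on $\mathcal{S}(\lambda,k,n)$ the arm of length $\lambda_a>\lambda_l$ requires $\lambda_a$ distinct entries in $\{2-k,\ldots,n+1\}$ but the content supplies only $\lambda_l<\lambda_a$ such values.

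For the formula (Part~\textbf{4}), Part~\textbf{2} restricts attention to contents $\nu$ with $l(\nu)\geq n+2$. Given such a $\nu$ and any SSYT on $\mathcal{S}(\lambda^t,k,n)$ realizing it, I would define $B$ to be the set of distinct foundation values at most $n+1$ and $C$ the set of values appearing once in the arm and once in the leg. Because the corner $\min(B)$ is the smallest arm entry and the smallest leg entry, it is not repeated, so $C\subseteq B\setminus\{\min(B)\}$, and one verifies directly that $\nu=\nu(B,C)$ and that the conditions of the theorem ($n+1\in B$, the specified inclusions, the cardinality bounds, and the interval condition on the maximal intervals $C_j$ of $C$) are precisely the consequences of strict increase in both lines and the lattice rule near every chain of doubled values. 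With $(B,C)$ fixed, the remaining freedom is the partition of $B\setminus(C\cup\{\min(B)\})$, a set of size $|B|-|C|-1$, into $\lambda_l-|C|-1$ arm-only cells and $|B|-\lambda_l$ leg-only cells of value $\leq n+1$ (the values exceeding $n+1$ being forced to the bottom of the leg in increasing order), yielding $\left(\begin{array}{c}|B|-|C|-1\\ \lambda_l-|C|-1\end{array}\right)$ SSYTx on $\mathcal{S}(\lambda^t,k,n)$. The analogous enumeration on $\mathcal{S}(\lambda,k,n)$ gives $\left(\begin{array}{c}|B|-|C|-1\\ \lambda_a-|C|-1\end{array}\right)$ SSYTx (vanishing when $|B|<\lambda_a$), and the stated coefficient is the difference.

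The main obstacle is extending the block-rotation machinery of Theorem~\ref{2theorem} to Parts~\textbf{1} and~\textbf{2}: the hook joins a long arm of length $\lambda_a$ to a long leg of length $\lambda_l$ at a single corner, so a block may straddle the corner, and entries of the leg exceeding $n+1$ interact delicately with the lattice condition under the changed reading order. Equally delicate is justifying the interval condition on $C$ in Part~\textbf{4}: without it a count over $(B,C)$ overshoots, since chains of repeated values force consecutive-run constraints that are captured exactly by $\min(C_j)-1\in B\setminus C$.
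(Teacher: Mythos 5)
Your Parts 2 and 3 are essentially the paper's: when all entries lie in $\{1,\ldots,n+1\}$, transposing the hook foundation swaps two strictly increasing lines sharing a corner, so it is automatically an involution preserving semistandardness and latticeness, and your Part 3 witness (small values plus $n+1,n+2,\ldots,n+\lambda_a$ forced into the short line) is a valid variant of the paper's. Your Part 4 enumeration also matches the paper's: for a content with $l(\nu)\geq n+2$ the entries exceeding $n+1$ are forced to the bottom of the leg, the doubled values $C$ and the corner are forced into both lines, and the only freedom is which $\lambda_l-|C|-1$ of the remaining $|B|-|C|-1$ values go into the arm, giving exactly the two binomial coefficients.

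The genuine gap is Part 1 for contents of length greater than $n+1$. You propose to extend the block-rotation injection of Theorem~\ref{2theorem}, flag it as ``the main obstacle,'' and never resolve it --- and it is not a routine extension. After naively transposing, every leg entry exceeding $n+1$ sits at the reading-first end of the new arm, so the whole tail $n+2,\ldots$ must be relocated to the bottom of the new leg and the arm must be back-filled with some choice of $\lambda_l-|R|$ values from $A\cup A'$; this is not a local rotation of a consecutive block, and since the two Littlewood--Richardson coefficients genuinely differ here, no bijective repair can exist --- you would need a non-canonical injection. The paper avoids this entirely: it computes \emph{both} coefficients exactly as $\binom{|A\cup A'|}{\lambda_l-|R|}$ and $\binom{|A\cup A'|}{\lambda_a-|R|}$ and proves the inequality by a product/ratio computation resting on $\lambda_a-|R|-i\geq |A\cup A'|-(\lambda_l-|R|)-i$. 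Your own Part 4 enumeration already contains these counts, so the fix is available to you, but you must actually prove the binomial difference is nonnegative: $\lambda_l\leq\lambda_a$ alone does not give $\binom{N}{\lambda_l-|C|-1}\geq\binom{N}{\lambda_a-|C|-1}$ (compare $\binom{10}{1}$ with $\binom{10}{5}$); you need the constraint $|B|+|C|\leq|\lambda|-1$, which forces the two lower indices to straddle $N/2$ symmetrically enough for unimodality to apply. As written, your Part 1 rests on machinery you have not built and cannot build in the claimed form.
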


\begin{proof} 

\

\noindent \textbf{1.}

Consider any SSYT $\mathcal{T}_1$ of shape $\mathcal{S}(\lambda,k,n)$ with content $\nu$ and lattice reading word.
We shall first show that there is a SSYT of shape $\mathcal{S}(\lambda^t,k,n)$ with content $\nu$ and lattice reading word.
Then, letting $\mathcal{S}(\lambda,k,n)=\kappa / \rho$ and $\mathcal{S}(\lambda^t,k,n)=\kappa_t / \rho_t$ for partitions $\kappa$, $\kappa_t$, $\rho$, and $\rho_t$, we shall show that the Littlewood-Richardson coefficients for these two diagrams and this content satisfy 
\[ c_{\rho_t \nu}^{\kappa_t} \geq c_{\rho \nu}^{\kappa}.\]
Having shown that this inequality holds for any content $\nu$ for which a SSYT of shape $\mathcal{S}(\lambda,k,n)$ with content $\nu$ and lattice reading word exists, this will show $s_{\mathcal{S}(\lambda^{t}, k,n)}-s_{\mathcal{S}(\lambda,k,n)} \geq_s 0.$

\

By Lemma~\ref{firstrowlem} we know that the first row of $\lambda$ contains a strictly increasing sequence $a_1<a_2<\ldots < a_{\lambda_a}$ where $a_{\lambda_a} \leq n+1$.
Also, since the columns of $\mathcal{T}_1$ strictly increase, the first column of $\lambda$ contains a strictly increasing sequence $a'_1<a'_2<\ldots < a'_{\lambda_l}$, where $a'_1=a_1$.

\

\noindent Given a particular content $\nu$, there are two cases that we must consider.

\begin{enumerate}
\item $l(\nu) \leq n+1$. %$n+2 \not\in \mathcal{T}_1$.
\item $l(\nu) > n+1$. %$n+2 \in \mathcal{T}_1$.
\end{enumerate}

\

\noindent \textbf{Case 1:}

Since $l(\nu) \leq n+1$, the largest possible entry of $\mathcal{T}_1$ is $n+1$. 
We obtain a tableau $T_2$ of shape $\lambda^t \oplus \Delta_n$ by simply transpose the foundation $\lambda$ of $\mathcal{T}_1$. 
Hence the first row $\lambda^t$ is the strictly increasing sequence $a'_1<a'_2<\ldots < a'_{\lambda_l}$. 
Also, if $k=0$ then $a_1=a'_1 >1$. Otherwise, if $k=1$, then $a_1=a'_1>0$.

It can also be seen that ${T}_2$ has a lattice reading word. 
Suppose not. 
Then, when reading the entries of $\lambda^t$, we must reach a point where we have read two $j$'s but no $j-1$'s. 
Since $\mathcal{T}_1$ has a lattice reading word and also has the two $j$'s in its foundation $\lambda$, a $j-1$ must appear in $\lambda$.
Thus a $j-1$ appears in $\lambda^t$ as well. 
Since the rows and columns of $\lambda^t$ strictly increase, the $j-1$ appears either in the first column of $\lambda^t$ above the entry $j$, or the $j-1$ appears in the first row of $\lambda^t$ left of the entry $j$. In either case the $j-1$ is read before the second $j$ is read, contradicting our assumption.

Thus we may apply Lemma~\ref{fatjoinlemma} to obtain a SSYT $\mathcal{T}_2$ of shape $\mathcal{S}(\lambda^t, k,n)$ with lattice reading word.

Further, we can recover $\mathcal{T}_1$ from $\mathcal{T}_2$ by transposition of the foundation of $\mathcal{T}_2$. 
This shows that if $l(\nu) \leq n+1$ then we have $c_{\rho_t \nu}^{\kappa_t} \geq c_{\rho \nu}^{\kappa}.$

\

\noindent \textbf{Case 2:}

If $l(\nu) > n+1$ then $n+2 \in \mathcal{T}_1$. 
Since $a_1<a_2<\ldots < a_{\lambda_a} \leq n+1$, then $n+2=a'_j$ for some $j$. 
It is clear that any SSYT of shape $\mathcal{S}(\lambda^t,k,n)$ with lattice reading word must have the values $a'_j, a'_{j+1},\ldots, a'_{\lambda_l}$ as the last $\lambda_1^t - j+1$ entries of the first column of $\lambda^t$. 
Since $\lambda_l - j+1 \leq \lambda_l \leq \lambda_a$, these entries do fit in this column.

Let $M$ be the multiset $\{a_1,a_2,\ldots,a_{\lambda_a}\} \cup \{a'_2,a'_3\ldots,a'_{j-1} \}$. 
Then $M$ is the remaining entries tha we need to place in $\lambda^t$. 
We have $|M|=\lambda_a+j-2$ and $\textrm{max}(M)=n+1$. 
Let $R =\{a_1,a_2,\ldots,a_{\lambda_a}\} \cap \{a'_1,a'_2\ldots,a'_{j-1} \}$.
We note that $R =\{a_1,a_2,\ldots,a_{\lambda_a}\} \cap \{a'_1,a'_2\ldots,a'_{\lambda_l} \}$ since $a_{\lambda_a} < n+2 = a'_{j}$. 
The set $R$ contains the values of $\mathcal{T}_1$ that appear in both the first row of $\lambda$ and the first column of $\lambda$.
For a SSYT of shape $\mathcal{S}(\lambda^t,k,n)$ with lattice reading word and content $\nu$ these values must also appear in both the first row of $\lambda^t$ and the first column of $\lambda^t$.

Consider $A=\{a_1,a_2,\ldots,a_{\lambda_a}\}-R$ and $A'=\{a'_1,a'_2\ldots,a'_{j-1} \}-R$.
Since we know that the values of $R$ must appear in both the first row of $\lambda^t$ and first column of $\lambda^t$, $A \cup A'$ contains the remaining values of $M$ that need to be placed in $\lambda^t$ 

Since $|R| < j \leq \lambda_l$, 
we can extend $R$ to an increasing sequence $b_1 < b_2 < \ldots < b_{\lambda_l}$ by choosing $\lambda_l - |R|$ additional values from $A \cup A'$. 
Then $M-\{b_1,\ldots,b_{\lambda_l} \} \subseteq M-R$ contains $k=|M|-\lambda_l=\lambda_a+j-2 -\lambda_l$ distinct values, each no greater than $n+1$.
That is, they are an increasing sequence $c_1 < c_2 < \ldots < c_{k}$, where $c_{k}\leq n+1$.
We have 
\begin{eqnarray*}
\  \lambda_a & = & 1+ (\lambda_a +j-2-\lambda_l) + (\lambda_l -j +1)
\\ & = & 1+k+(\lambda_l -j +1),
\\
\end{eqnarray*}
so we may fill $\lambda^t$ as shown below.

\

\begin{center}
\begin{tabular}{cccc}
$b_1$ & $b_2$ & $\cdots$ & $b_{\lambda_l}$ \\
$c_1$\\
$c_2$\\
$\vdots$\\
$c_{k}$\\
$a'_{j}$\\
$a'_{j+1}$\\
$\vdots$\\
$a'_{\lambda_l}$
\end{tabular}
\end{center}

\

\noindent This filling gives us the tableau ${T}_2$ of shape $\lambda^t \oplus \Delta_n$. 
Also $b_1>1$ for $k=0$ and $b_1>0$ for $k=1$.

It can also be seen that ${T}_2$ has a lattice reading word. 
Suppose not. 
Then, when reading the entries of $\lambda^t$, we must reach a point where we have read two $j$'s but no $j-1$'s. 
Since $\mathcal{T}_1$ has a lattice reading word and also has the two $j$'s in its foundation $\lambda$, a $j-1$ must appear in $\lambda$. Thus a $j-1$ appears in $\lambda^t$ as well.
Since the rows and columns of $\lambda^t$ strictly increase, either the $j-1$ appears either in the first column of $\lambda^t$ above the entry $j$, or the $j-1$ appears in the first row of $\lambda^t$ left of the entry $j$. In either case the $j-1$ is read before the second $j$ is read, contradicting our assumption.

Therefore we may apply Lemma~\ref{fatjoinlemma} to obtain a SSYT $\mathcal{T}_2$ of shape $\mathcal{S}(\lambda^t,k,n)$ with lattice reading word.

Therefore from any SSYT $\mathcal{T}_1$ of shape $\mathcal{S}(\lambda, k,n)$ with lattice reading word and content $\nu$ we can create a SSYT $\mathcal{T}_2$ of shape $\mathcal{S}(\lambda^{t}, k,n)$ with lattice reading word and content $\nu$.

\

Let $c_{\rho_t \nu}^{\kappa_t}$ be the number of SSYT of shape $\mathcal{S}(\lambda^{t}, k,n)$ with lattice reading word and content $\nu$, and $c_{\rho \nu}^{\kappa}$ be the number of SSYT of shape $\mathcal{S}(\lambda, k,n)$ with lattice reading word and content $\nu$.
We shall show that the sets $R$ and $A \cup A'$ described above are completely determined by $\nu$.

Since $\Delta_n$ is uniquely filled, from $c$ we can determine the content of the foundation $\lambda$ (or $\lambda^t$ respectively) needed to create a SSYT of shape $\mathcal{S}(\lambda, k,n)$ ($\mathcal{S}(\lambda^{t}, k,n)$ respectively) with lattice reading word and content $\nu$.
From the content of the foundation $\lambda$ ($\lambda^t$ respectively), we can determine the values $a'_j, a'_{j+1},\ldots, a'_{\lambda_l}$ greater than $n+1$ and we can determine the set $R$ of values that appear in both the first row of $\lambda$ ($\lambda^t$ resp.) and first column of $\lambda$ ($\lambda^t$ resp.).
Further, any SSYT of shape $\mathcal{S}(\lambda,k,n)$ ($\mathcal{S}(\lambda^t,k,n)$ resp.) must contain $R$ in the first row of $\lambda$ ($\lambda^t$ resp.) since otherwise the first column of $\lambda$ ($\lambda^t$ resp.) would not strictly increase.
After determining the rest of the entries of the first row of $\lambda$ ($\lambda^t$ respectively), the rest of the tableau is uniquely determined.
These entries are choses from the set $A \cup A' = \{ x \in \lambda \textrm{ } (\lambda^t \textrm{ resp.}) \textrm{ } | \textrm{ } x \leq n+1, \textrm{ } x \not\in R\}$.

Therefore the number of SSYTx of shape $\mathcal{S}(\lambda^{t}, k,n)= \kappa_t / \rho_t$ with lattice reading word and content $\nu$ is given by
\[ c_{\rho_t \nu}^{\kappa_t} = \left( \begin{array}{c}
                  |A \cup A'| \\
                    \lambda_l -|R|\\
           \end{array} \right)  \]
and the number of SSYTx of shape $\mathcal{S}(\lambda, k,n)= \kappa / \rho$ with lattice reading word and content $\nu$ is given by
\[ c_{\rho \nu}^{\kappa} = \left( \begin{array}{c}
                  |A \cup A'| \\
                    \lambda_a -|R|\\
           \end{array} \right).  \]

\

Since $\lambda_l \geq j-1$, for each $i$ we have  
\begin{eqnarray*}
\  \lambda_a -|R|-i & \geq & \lambda_a - |R|-i + (j-1 - \lambda_l)
\\ & \geq &  (\lambda_a -|R|) + (j-1 -|R|) - (\lambda_l -|R|) -i
\\ & \geq &   |A|+|A'|- (\lambda_l -|R|) -i.
\end{eqnarray*}
That is, 
\begin{equation}
\label{hooky}
\lambda_a -|R|-i \geq  |A \cup A'|- (\lambda_l -|R|) -i,
\end{equation}
for each $i$.

\

\noindent Therefore
\begin{eqnarray*}
\  c_{\rho_t \nu}^{\kappa_t} & = & \frac{(|A \cup A'|)! }{(|A \cup A'|-(\lambda_l-|R|))!(\lambda_l-|R|)!}
\\ & = & \frac{(|A \cup A'|)! }{(|A \cup A'|-(\lambda_a-|R|))!(\lambda_a-|R|)!} \times \prod_{i=0}^{\lambda_a - \lambda_l-1} \frac{\lambda_a - |R| -i}{|A \cup A'| -(\lambda_l - |R|) -i}
\\ & = & c_{\rho \nu}^{\kappa} \times \prod_{i=0}^{\lambda_a - \lambda_l-1} \frac{\lambda_a - |R| -i}{|A \cup A'| -(\lambda_l - |R|) -i}
\\ & \geq & c_{\rho \nu}^{\kappa} ,
\end{eqnarray*}
where we have used Equation~\ref{hooky} in the final step. Therefore $s_{\mathcal{S}(\lambda^{t}, k,n)}-s_{\mathcal{S}(\lambda,k,n)} \geq_s 0$ in this case as well. 

\

\noindent \textbf{2.}

We now show that we can reverse the process described in the proof of \textbf{1} when we restrict to fillings using the numbers $\{1,2,\ldots, n+1\}$.
If we take any SSYT $\mathcal{T}_2$ of shape $\mathcal{S}(\lambda^{t},k,n)$ with lattice reading word then the first column of $\lambda^{t}$ contains a strictly increasing sequence $a_1 < a_2 < \ldots < a_{\lambda_a}$ with $a_{\lambda_a} \leq n+1$, and the first row of $\lambda^t$ contains a strictly increasing sequence $a'_1 < a'_2 < \ldots < a'_{\lambda_l}$ with $a'_1 =a_1$ and $a'_{\lambda_l}\leq n+1$. 
We have $a_1=a'_1>1$ if $k=0$ and $a_1=a'_1>0$ if $k=1$.
By transposing $\lambda^t$ we obtain a SSYT $T_1$ of shape $\lambda \oplus \Delta_n$. % and, by Equation~\ref{case11c} for $k=0$ or Equation~\ref{case11c1} for $k=0$, this gives rise to a SSYT $\mathcal{T}_1$ of shape $\mathcal{S}(\lambda,k,n)$. 

Further, the tableau ${T}_1$ has a lattice reading word. Suppose not.
Then, when reading the entries of $\lambda$, we must reach a point where we have read two $j$'s but no $j-1$'s. 
Since $\mathcal{T}_2$ has a lattice reading word and also has the two $j$'s in its foundation $\lambda^t$, a $j-1$ must appear in $\lambda^t$. Thus $\lambda$ contains a $j-1$ as well.
Since the rows and columns of $\lambda$ strictly increase, either the $j-1$ appears either in the first column of $\lambda$ above the entry $j$, or the $j-1$ appears in the first row of $\lambda$ left of the entry $j$. In either case the $j-1$ is read before the second $j$ is read, contradicting our assumption.

Therefore we may apply Lemma~\ref{fatjoinlemma} to obtain a SSYT of shape $\mathcal{S}(\lambda, k,n)$ with lattice reading word. 
Finally, we can recover the tableau $\mathcal{T}_2$ from $\mathcal{T}_1$ by transposing the entries of the foundation of $\mathcal{T}_1$.
This shows that $s_{\mathcal{S}(\lambda, k,n)}-s_{\mathcal{S}(\lambda^{t},k,n)} \geq_s 0$ in $n+1$ variables. This and \textbf{1.} completes the proof that $s_{\mathcal{S}(\lambda^{t}, k,n)} =_{n+1} s_{\mathcal{S}(\lambda,k,n)}$.

\

\noindent \textbf{3.}

If $\lambda \neq \lambda^{t}$ then $\lambda_a \geq \lambda_l - 1$ and $\lambda_a \geq 3$. 
Consider the following tableau $T$ of shape $\lambda^t$.

\

\begin{center}
\begin{tabular}{ccccccc}
$1$ & $2$ & $\cdots$ & $\lambda_l-2$ & $n$ & $n+1$\\
$2$\\
$\vdots$\\
$\lambda_a-3$\\
$n$\\
$n+1$\\
$n+2$
\end{tabular}
\end{center}

\

If $\lambda_a =3$ then the first column of $\lambda^t$ consists of the entries $n,n+1,n+2$, and if $\lambda_l =2$ then the first row of $\lambda^t$ consists of the entries $n,n+1$. 
For either $k=0$ or $k=1$, this tableau clearly extends to a SSYT $\mathcal{T}_1$ of shape $\mathcal{S}(\lambda^t,k,n)$ with lattice reading word.
Further, since $\lambda_a \geq \lambda_l - 1$, both the first row of $T$ and the first column of $T$ contain the set of elements $ R=\{1,2,\ldots, \lambda_l-2, n, n+1 \}$ in common.

Suppose we are given a SSYT $\mathcal{T}_2$ of shape $\mathcal{S}(\lambda, k, n)$ with lattice reading word and the same content as $\mathcal{T}_1$. 
Then the foundation of $\mathcal{T}_2$ must also contain the set of elements $R$ in its first row and its first column.
But the foundation of $\mathcal{T}_2$ is of shape $\lambda$; thus, the first column of the foundation of $\mathcal{T}_2$ is precisely the elements $R=\{1,2,\ldots, \lambda_l-2, n, n+1 \}$. This leaves no room for $n+2$, since it cannot appear in the first $n+1$ rows of $\mathcal{T}_2$. Therefore no such tableau $\mathcal{T}_2$ can exist.
Therefore $s_{\mathcal{S}(\lambda, k,n)} \neq_{n+2} s_{\mathcal{S}(\lambda^{t},k,n)}$. 

\

This completes the proof of \textbf{1}, \textbf{2}, and \textbf{3}. We now show that the formula stated for $s_{\mathcal{S}(\lambda^{t}, k,n)} - s_{\mathcal{S}(\lambda,k,n)}$ is correct.

In the proof of \textbf{1}, we saw that if $l(\nu) \leq n+1$ then the number of SSYT of shape $\mathcal{S}(\lambda^{t}, k,n)$ with lattice reading word and content $\nu$ is the same as the number of SSYT of shape $\mathcal{S}(\lambda, k,n)$ with lattice reading word and content $\nu$.
Then we saw that if $l(\nu) \geq n+2$, then a SSYT of shape $\mathcal{S}(\lambda^{t}, k,n)$ with lattice reading word and content $\nu$ exists precisely when a SSYT of shape $\mathcal{S}(\lambda, k,n)$ with lattice reading word and content $\nu$ exists.
Furthermore, when such tableaux exist, we saw that the number of SSYT of shape $\mathcal{S}(\lambda^{t}, k,n)= \kappa_t / \rho_t$ with lattice reading word and content $\nu$ is given by
\[ c_{\rho_t \nu}^{\kappa_t} = \left( \begin{array}{c}
                  |A \cup A'| \\
                    \lambda_l -|R|\\
           \end{array} \right)  \]
and the number of SSYT of shape $\mathcal{S}(\lambda, k,n)= \kappa / \rho$ with lattice reading word and content $\nu$ is given by
\[ c_{\rho \nu}^{\kappa} = \left( \begin{array}{c}
                  |A \cup A'| \\
                    \lambda_a -|R|\\
           \end{array} \right),  \]
where, for the given content $\nu$, $R$ was the set of values that must appear in both the first row and first column of the foundation of each diagram, and $A \cup A'$ are the values $\leq n+1$ not in $R$ that must also appear in the foundation of each diagram.
We recall that all but one element of $R$ appear twice in the foundation. The smallest value of $R$ being the top-left entry of each foundation.

Let $\nu$, $l(\nu) \geq n+2$, be such a content.
If we let $C$ be the values that appear twice in the foundation we obtain $|R|=|C|+1$.
Further, if we let $B$ be the set of all values $\leq n+1$ that appear in the foundation, we have $A \cup A'= B-R$, so that $|A \cup A'| = |B|-|R|=|B|-|C|-1.$

Thus for each content $\nu$ we obtain a pair $B,C$.
We now show that the sets $B$ and $C$ have the properties listed in the statement of the theorem. 
We have $B \subseteq \{1,2,\ldots n+1\}$ when $k=1$ and $B \subseteq \{2,3\ldots n+1\}$ when $k=0$.
Thus $B \subseteq \{2-k,3-k,\ldots n+1\}$.

Decompose $C=\bigcup_{j=1}^{m}C_j$ for disjoint $C_j$. 
Then, by the definition of $C$, for each $C_j$, $\textrm{min}(C_j)$ appears twice in the foundation but $\textrm{min}(C_j) -1$ does not.
The lattice condition then implies that $\textrm{min}(C_j) -1 \in B-C$ for each $j$.
In particular $\textrm{min}(C) -1 \in B-C$.
This and $C \subseteq B \subseteq \{2-k,3-k,\ldots n+1\}$ gives $C \subseteq \{3-k,4-k,\ldots n+1\}$.

Additionally, we have $|B|+|C| \leq |\lambda|-1$ since we assumed $l(\nu) \geq n+2$.
Since the entries of the first row are distinct, each is an element of $B$. Thus $|B| \geq \lambda_a$. 
Also, since $|R|$ appears in the first column of $\lambda$ we have $|C|+1 = |R| \leq \lambda_l$.
Therefore $B$ and $C$ have the desired properties.

\

Conversely, for any sets $B$ and $C$ with the desired properties we show that there exists SSYT of shape $\mathcal{S}(\lambda,k,n)$ with lattice reading word and content
\[ \nu(B,C) = \delta_n+ \sum_{b \in B} e_b + \sum_{c \in C} e_c + (0^{n+1},1^{|\lambda|-|B|-|C|}).\]

To this end we create a filling of $\lambda$ with content $\sum_{b \in B} e_b + \sum_{c \in C} e_c + (0^{n+1},1^{|\lambda|-|B|-|C|})$.
We first place the $|\lambda|-|B|-|C|$ values $n+2,n+3,\ldots,n+|\lambda|-|B|-|C|+1$ at the bottom of the first column of $\lambda$.
We then place $\textrm{min}(B)$ at the top of the first column of $\lambda$.
Then we choose $\lambda_a-|C|-1$ values from $B-C-\textrm{min}(B)$.
We place these values together with the values in $C$ in increasing order in the remaining $\lambda_a-1$ positions in the first row of $\lambda$.
We place the remaining values of $B-C-\textrm{min}(B)$ together with the values in $C$ in increasing order in the remaining positions of the first column of $\lambda$.
This gives a semistandard filling of $\lambda$.
Together with the unique filling of $\Delta_n$, this gives a SSYT $T$ of shape $\mathcal{S}(\lambda,k,n)$ and content $\nu(B,C)$.

We now show that $T$ has a lattice reading word.
If $h \leq n+1$ appears twice in $\lambda$, then $h \in C$. Hence $h \in C_j$ for some $j$. 
Then either $h=\textrm{min}(C_j)$, in which case $h-1 \in B-C$ appears once in $\lambda$,
or $h \neq \textrm{min}(C_j)$, in which case $h-1 \in C$ appears twice in $\lambda$.
Thus, if $h \leq n+1$ appears in $\lambda$, then the number of $h$'s is $\lambda$ is at most one less than the number of $h-1$'s in $\lambda$.
Further, if $h \geq n+2$ appears in $\lambda$, then each value $n+2,n+3,\ldots,h-1$ appears in $\lambda$.
Thus, for all $h$, if $h$ appears in $\lambda$, then the number of $h$'s is $\lambda$ is at most one less than the number of $h-1$'s in $\lambda$.
Together with the content of $\Delta_n$, this shows that $T$ has a lattice reading word.

\

Therefore the terms $s_\nu$ appearing in the difference $s_{\mathcal{S}(\lambda^{t}, k,n)} - s_{\mathcal{S}(\lambda,k,n)}$ are precisely those $\nu$ of the form $\nu(B,C)$, where the sets $B$ and $C$ have the listed properties.
Now, since we have that \[ c_{\rho_t \nu(B,C)}^{\kappa_t} = \left( \begin{array}{c}
                  |A \cup A'| \\
                    \lambda_l -|R|\\
                    \end{array} \right) 
  =  \left( \begin{array}{c}
                  |B|-|C|-1 \\
                    \lambda_l -|C| -1\\
                    \end{array} \right)  \]
and \[ c_{\rho \nu(B,C)}^{\kappa} = \left( \begin{array}{c}
                  |A \cup A'| \\
                    \lambda_a -|R|\\
                    \end{array} \right) 
  =  \left( \begin{array}{c}
                  |B|-|C|-1 \\
                    \lambda_a -|C| -1\\
                    \end{array} \right),  \]
\noindent we find
\[ s_{\mathcal{S}(\lambda^{t}, k,n)} - s_{\mathcal{S}(\lambda,k,n)} 
   = \sum_{B,C} \left[ 
              \left( \begin{array}{c}
                  |B|-|C|-1 \\
                    \lambda_l -|C| -1\\
                    \end{array} \right)  
            -   \left( \begin{array}{c}
                  |B|-|C|-1 \\
                    \lambda_a -|C| -1\\
                    \end{array} \right)  
             \right]  s_{\nu(B,C)},  \\     \]
where the sum is over all $B$,$C$ with the listed properties. \qed
\end{proof}

\

\begin{example}
Let $n=4$, $\lambda = (3,1)$, and $k=0$.
We are interested in the following two diagrams.

\setlength{\unitlength}{0.4mm}

\begin{picture}(0,100)(-30,10)

\put(70,90){\framebox(10,10)[tl]{ }}

\put(60,80){\framebox(10,10)[tl]{ }}
\put(70,80){\framebox(10,10)[tl]{ }}

\put(50,70){\framebox(10,10)[tl]{ }}
\put(60,70){\framebox(10,10)[tl]{ }}
\put(70,70){\framebox(10,10)[tl]{ }}

\put(40,60){\framebox(10,10)[tl]{ }}
\put(50,60){\framebox(10,10)[tl]{ }}
\put(60,60){\framebox(10,10)[tl]{ }}
\put(70,60){\framebox(10,10)[tl]{ }}

\put(40,50){\framebox(10,10)[tl]{ }} \put(50,50){\framebox(10,10)[tl]{ }}
\put(40,40){\framebox(10,10)[tl]{ }}
\put(40,30){\framebox(10,10)[tl]{ }}

\put(40,15){$\mathcal{S}( \lambda^t , 0,4)$}
\put(140,15){$\mathcal{S}( \lambda , 0,4)$}

\put(170,90){\framebox(10,10)[tl]{ }}

\put(160,80){\framebox(10,10)[tl]{ }}
\put(170,80){\framebox(10,10)[tl]{ }}

\put(150,70){\framebox(10,10)[tl]{ }}
\put(160,70){\framebox(10,10)[tl]{ }}
\put(170,70){\framebox(10,10)[tl]{ }}

\put(140,60){\framebox(10,10)[tl]{ }}
\put(150,60){\framebox(10,10)[tl]{ }}
\put(160,60){\framebox(10,10)[tl]{ }}
\put(170,60){\framebox(10,10)[tl]{ }}

\put(140,50){\framebox(10,10)[tl]{ }} \put(140,40){\framebox(10,10)[tl]{ }}
\put(150,50){\framebox(10,10)[tl]{ }}
\put(160,50){\framebox(10,10)[tl]{ }}

\end{picture}

\noindent Theorem~\ref{1hooktheorem} gives
 \[s_{\mathcal{S}( \lambda^t , 0,4)}-s_{\mathcal{S}( \lambda,0,4)} = \sum_{B,C}   s_{\nu(B,C)},\]
where $5 \in B \subseteq \{ 2,3,4,5 \}$, $C \subseteq \{ 3,4,5 \}$, $C \subset B$, 
$|B|+|C| \leq 3$, $|B|\geq 2$, $|C| \leq 1$, and $\textrm{min}(C)-1 \in B-C$.

The condition $|C| \leq 1$ implies that $C$ is empty, or $C$ is a singleton.
If $C = \{c\}$ is a singleton then $|B|+|C| \leq 3$ gives that $|B| \leq 2$.
Since $C \subset B$ and $\textrm{min}(C)-1 \in B$, we have $B = \{c-1,c \}$.
Since $5 \in B$ and $C \subseteq \{ 3,4,5 \}$ this means that $C=\{5\}$ is the only possible singleton and for this set the $B$ satisfing the rest of the conditions is $B=\{4,5\}$.
The coefficient of $\nu(\{4,5\},\{5\})$ is  
\[ \left( \begin{array}{c}
                  2-1-1 \\
                    2 -1 -1\\
                    \end{array} \right)  
            -   \left( \begin{array}{c}
                  2-1-1 \\
                    3 -1 -1\\
                    \end{array} \right) = 
\left( \begin{array}{c}
                  0 \\
                    0\\
                    \end{array} \right)  
            -   \left( \begin{array}{c}
                  0 \\
                    1\\
                    \end{array} \right) = 1-0=1.\]

If $C= \emptyset$, then $|B|+|C| \leq 3$ gives that $|B| \leq 3$. Hence $2 \leq |B| \leq 3$ and $5 \in B \subseteq \{ 2,3,4,5 \}$.
For $|B|=2$ we have the possibilities $B=\{2,5\}$, $B=\{3,5\}$, and $B=\{4,5\}$.
For these $B$, the coefficient of $\nu(B, \emptyset)$ is
\[ \left( \begin{array}{c}
                  2-0-1 \\
                    2 -0 -1\\
                    \end{array} \right)  
            -   \left( \begin{array}{c}
                  2-0-1 \\
                    3 -0 -1\\
                    \end{array} \right) = 
\left( \begin{array}{c}
                  1 \\
                    1\\
                    \end{array} \right)  
            -   \left( \begin{array}{c}
                  1 \\
                    2\\
                    \end{array} \right) = 1-0=1.\]

For $|B|=3$ we have the possibilities $B=\{2,3,5\}$,$B=\{2,4,5\}$, and $B=\{3,4,5\}$.
For these $B$, the coefficient of $\nu(B, \emptyset)$ is
\[ \left( \begin{array}{c}
                  3-0-1 \\
                    2 -0 -1\\
                    \end{array} \right)  
            -   \left( \begin{array}{c}
                  3-0-1 \\
                    3 -0 -1\\
                    \end{array} \right) = 
\left( \begin{array}{c}
                  2 \\
                    1\\
                    \end{array} \right)  
            -   \left( \begin{array}{c}
                  2 \\
                    2\\
                    \end{array} \right) = 2-1=1.\]

Thus we have
\begin{eqnarray*}
s_{\mathcal{S}( \lambda^t , 0,4)}-s_{\mathcal{S}( \lambda,0,4)} &=& \sum_{B,C}   s_{\nu(B,C)} \\
&=& s_{\nu(\{4,5\}, \{5\})} +  s_{\nu(\{2,5\},\emptyset)} +  s_{\nu(\{3,5\},\emptyset)} +  s_{\nu(\{4,5\},\emptyset)} \\
&\textrm{ } & +  s_{\nu(\{2,3,5\},\emptyset)} +  s_{\nu(\{2,4,5\},\emptyset)} +  s_{\nu(\{3,4,5\},\emptyset)} \\
&=& s_{(4,3,2,2,2,1)} + s_{(4,4,2,1,1,1,1)} + s_{(4,3,3,1,1,1,1)} + s_{(4,3,2,2,1,1,1)}  \\
&\textrm{ } & + s_{(4,4,3,1,1,1)} + s_{(4,4,2,2,1,1)} + s_{(4,3,3,2,1,1)},   \\
\end{eqnarray*}
where we have omitted the details of computing $\nu(B,C)$ for each $B,C$.
For an example of this, we have
\begin{eqnarray*}
\nu(\{4,5\}, \{5\}) &=& \delta_4 + \sum_{b \in \{4,5 \} } e_b + \sum_{c \in \{5 \} } e_c + (0^5,1^{4-2-1} ) \\
&=& (4,3,2,1) + (0,0,0,1,1) + (0,0,0,0,1) + (0,0,0,0,0,1)\\
&=& (4,3,2,2,2,1). \\
\end{eqnarray*}

\end{example}

\section{A Finite Variable Conjecture}

In this section we are concerned with equalities of skew Schur functions in finitely many variables.

In Theorem~\ref{1theorem} and Theorem~\ref{2theorem} we proved that for each $n$ and $0 \leq k \leq 1$, we have 
\[ s_{ \mathcal{S}(\lambda^t,k,n)} =_{n+1} s_{\mathcal{S}(\lambda,k,n)} \] when $\lambda$ has either one or two parts or when $\lambda$ is a hook.

Since we have proven the finite variable equality
\[ s_{ \mathcal{S}(\lambda^t,k,n)} =_{n+1} s_{\mathcal{S}(\lambda,k,n)} \] %when $\lambda$ has either one or two parts.
for these simple shapes $\lambda$, one might ask for which other partitions $\lambda$ is this equality true.

We make the following conjecture.

\begin{conjecture}
\label{finstairconj}
For each $n \geq 1$, $k \geq 0$, and partition $\lambda$ such that $\mathcal{S}(\lambda,k,n)$ and $\mathcal{S}(\lambda^t,k,n)$ are connected skew diagrams we have
\[ s_{ \mathcal{S}(\lambda^t,k,n)} =_{n+1} s_{\mathcal{S}(\lambda,k,n)}. \]

\end{conjecture}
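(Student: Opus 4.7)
The plan is to work in terms of lattice-reading-word SSYT counts. By the usual reduction, $s_{\mathcal{S}(\lambda^t,k,n)} =_{n+1} s_{\mathcal{S}(\lambda,k,n)}$ holds if and only if, for every partition $\nu$ with $l(\nu) \leq n+1$, the number of SSYT of shape $\mathcal{S}(\lambda,k,n)$ with content $\nu$ and lattice reading word equals the corresponding count for $\mathcal{S}(\lambda^t,k,n)$. Since the $\Delta_n$ portion is uniquely filled (with content $\delta_n$), the remaining content $\nu - \delta_n$ is distributed over the foundation; entries in the foundation lie in $\{1, 2, \ldots, n+1\}$, and by Lemma~\ref{firstrowlem} the first row of the foundation is a strictly increasing sequence from $\{2-k, \ldots, n+1\}$. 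Thus the problem reduces to a bijection between the ``constrained fillings'' of $\lambda$ and those of $\lambda^t$ that preserves content and the lattice extension property.

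I would first try to generalize the block-rotation bijection of Theorem~\ref{2theorem} to arbitrary $\lambda$. Given a constrained filling $F$ of $\lambda$, transpose to obtain a filling $F'$ of $\lambda^t$, which in general fails to be semistandard or fails the lattice condition. Then iteratively apply block rotations: locate an obstruction (either a strict-increase failure in a column or a ``future lattice'' failure in the sense of Step~B of the proof of Theorem~\ref{2theorem}), identify the maximal block of consecutively valued entries whose clockwise rotation resolves the obstruction, and rotate. The structure of these blocks should be dictated, as in the two-part and hook cases, by the combined pressure of first-row distinctness on $\lambda$ and the lattice condition. Inducting on the maximum entry in the foundation, I would expect the rotations from $\lambda$ to $\lambda^t$ and the analogous rotations from $\lambda^t$ to $\lambda$ to be mutually inverse, yielding the desired bijection.

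The main obstacle is the cascading nature of obstructions when $\lambda$ has many rows. In the two-part case, repeated values could only occur in the second row, so block rotations did not interact; in the hook case, the shape again isolated the rotations enough that a straightforward formula emerged. For a generic $\lambda$ with several rows and columns, resolving one pair of equal entries by a rotation can introduce a new pair further down the tableau, and controlling this cascade so that the procedure terminates with a valid SSYT---while simultaneously being reversible---is delicate. A plausible remedy is to order the rotations by the smallest value in each block and to show the blocks remain pairwise disjoint, paralleling the non-overlap argument around page~\pageref{nonoverlap}.

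If a direct bijection proves intractable, an alternative is a Jacobi--Trudi approach. Write $\mathcal{S}(\lambda,k,n) = \mu/\delta_{n+k-1}$ and $\mathcal{S}(\lambda^t,k,n) = \mu_t/\delta_{n+k-1}$, then compare the $(n+1)$-variable specializations of the determinants $\det(h_{\mu_i - \delta_j - i + j})$ and $\det(h_{(\mu_t)_i - \delta_j - i + j})$, exploiting $h_r(x_1,\ldots,x_{n+1}) = 0$ for $r < 0$ together with the rigid structure imposed by the staircase inner shape. Verifying such a determinantal identity in $n+1$ variables may in fact be cleaner---though less illuminating combinatorially---and would provide a uniform treatment independent of the internal structure of $\lambda$.
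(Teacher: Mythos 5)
The statement you are attempting is Conjecture~\ref{finstairconj}: the paper does not prove it, and offers only computer verification for small $n$ and $k$ together with proofs of the special cases where the foundation is a single row, two rows, or a hook (Theorems~\ref{1theorem}, \ref{2theorem}, \ref{1hooktheorem}), all restricted to $0 \leq k \leq 1$. Your proposal is likewise not a proof: it is a plan whose central step --- extending the block-rotation bijection to arbitrary $\lambda$ --- is exactly the part that remains open. You name the obstacle yourself: for a foundation with many rows and columns, a rotation repairing one violation can create new violations elsewhere, and you would need to prove that some ordering of the rotations terminates in a semistandard filling with lattice reading word and that the entire procedure is invertible. Nothing in the proposal establishes this. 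The phrase ``I would expect the rotations \ldots to be mutually inverse'' is a hope, not an argument, and the block-disjointness proved around page~\pageref{nonoverlap} leans heavily on the two-column geometry (a value can repeat at most twice, the forced location of the $j-1$, the uniqueness of the $n+2$), none of which survives for a general shape.

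Two further points. First, the conjecture allows all $k \geq 0$, but Lemma~\ref{fatjoinlemma} --- which you invoke implicitly every time you reattach a transposed foundation to $\Delta_n$ --- is only stated and proved for $0 \leq k \leq 1$, where $\delta_{k-1} = \emptyset$ and the foundation is an honest partition shape. For $k \geq 2$ the foundation is the skew shape $\lambda/\delta_{k-1}$, the rows of the foundation are staggered against the bottom row of $\Delta_n$ differently, and both the transposition step and the reattachment lemma would need to be reformulated and reproved; your reduction does not address this. Second, the Jacobi--Trudi alternative is only named, not executed: the relevant determinants do not visibly simplify upon specializing to $n+1$ variables, and establishing their equality is not obviously easier than the bijective route. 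As it stands, your proposal correctly locates the difficulty but does not overcome it, and the statement should be regarded as open beyond the cases treated in the paper.
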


For each pair $n$, $k$, there are many partitions $\lambda$ such that both $\mathcal{S}(\lambda,k,n)$ and $\mathcal{S}(\lambda^t,k,n)$ are defined. 
Namely, for all partitions $\lambda$ contained in the square $({n+k}^{n+k})$. 

\

\

\

\

\setlength{\unitlength}{0.35mm}

\begin{picture}(100,60)(-100,40)

\put(-30,-110){\dashbox{1}(160,160)[tl]{ }}

\put(50,50){\line(1,0){80}}

\put(130,50){\line(0,1){70}}

\put(-30,-30){\line(1,0){10}}
\put(-20,-20){\line(1,0){10}}
\put(-10,-10){\line(1,0){10}}
\put(0,0){\line(1,0){10}}
\put(10,10){\line(1,0){10}}
\put(20,20){\line(1,0){10}}
\put(30,30){\line(1,0){10}}
\put(40,40){\line(1,0){10}}

\put(50,50){\line(1,0){10}}
\put(60,60){\line(1,0){10}}
\put(70,70){\line(1,0){10}}
\put(80,80){\line(1,0){10}}
\put(90,90){\line(1,0){10}}
\put(100,100){\line(1,0){10}}
\put(110,110){\line(1,0){10}}
\put(120,120){\line(1,0){10}}

\put(-20,-30){\line(0,1){10}}
\put(-10,-20){\line(0,1){10}}
\put(0,-10){\line(0,1){10}}
\put(10,0){\line(0,1){10}}
\put(20,10){\line(0,1){10}}
\put(30,20){\line(0,1){10}}
\put(40,30){\line(0,1){10}}
\put(50,40){\line(0,1){10}}

\put(60,50){\line(0,1){10}}
\put(70,60){\line(0,1){10}}
\put(80,70){\line(0,1){10}}
\put(90,80){\line(0,1){10}}
\put(100,90){\line(0,1){10}}
\put(110,100){\line(0,1){10}}
\put(120,110){\line(0,1){10}}

\put(-30,-80){\line(1,0){30}}
\put(0,-70){\line(1,0){10}}
\put(10,-50){\line(1,0){40}}
\put(50,-20){\line(1,0){20}}
\put(70,0){\line(1,0){20}}
\put(90,10){\line(1,0){10}}
\put(100,20){\line(1,0){30}}

\put(-30,-80){\line(0,1){50}}
\put(0,-80){\line(0,1){10}}
\put(10,-70){\line(0,1){20}}
\put(50,-50){\line(0,1){30}}
\put(70,-20){\line(0,1){20}}
\put(90,0){\line(0,1){10}}
\put(100,10){\line(0,1){10}}
\put(130,20){\line(0,1){30}}

\put(25,-5){$\lambda / \delta_{k-1}$}

\put(100,70){$\Delta_n$}

\put(-80,-40){$n+k$}

\put(-50,-110){\line(0,1){160}}
\put(-50,-110){\line(1,0){5}}
\put(-50,50){\line(1,0){5}}

\put(-30,-130){\line(1,0){160}}
\put(-30,-130){\line(0,1){5}}
\put(130,-130){\line(0,1){5}}

\put(35,-145){$n+k$}

\end{picture}

\

\

\

\

\

\

\

\

\

\

\

\

\

\

\

\

\

\noindent Both will be connected skew diagrams so long as $\delta_{k+1} \subseteq \lambda \subseteq ({n+k}^{n+k})$.

Computer-assisted calculations performed using John Stembridge's ``SF-package for maple" \cite{stem} have shown that Conjecture~\ref{finstairconj} holds for the following values of the pair $n,k$, when searching through all pairs of connected skew shapes $\mathcal{S}(\lambda^t,k,n)$, $\mathcal{S}(\lambda^t,k,n)$. 
\begin{itemize}
\item $n=2$, $0 \leq k \leq 5$,
\item $n=3$, $0 \leq k \leq 4$, and
\item $n=4$, $0 \leq k \leq 1$.
\end{itemize}
When $n=1$, the conjecture gives an equality in two variables. 
In this case it is not hard to check that the only pairs of skew shapes $\mathcal{S}(\lambda^t,k,n)$, $\mathcal{S}(\lambda^t,k,n)$ with \textit{nonzero} skew Schur functions in two variables have $\lambda = \lambda^t$, so the conjecture does hold for $n=1$.

As $n$ and $k$ increase, the number of shapes $\mathcal{S}(\lambda,k,n)$ grows quickly. 
This search space can be reduced by only considering the partitions $\lambda$ such that $\lambda$ is contained in $\delta_{n+k}$, since 
otherwise both skew diagrams $\mathcal{S}(\lambda,k,n)$ and $\mathcal{S}(\lambda^t,k,n)$ contain a column of length $n+2$, which implies that both skew Schur functions equal $0$ in $n+1$ variables. 
Nevertheless, as $n$ and $k$ increase the average size of the shapes increase, which makes it very time-consuming to compute each skew Schur function $s_{\mathcal{S}(\lambda,k,n)}$. 
This makes computationally verifying Conjecture~\ref{finstairconj} for a given pair $n$, $k$ difficult for larger values of $n$ and $k$.

\end{document}